\newtheorem{theorem}{Theorem}
\newtheorem{lemma}{Lemma}
\theoremstyle{definition}
\newtheorem{definition}{Definition}
\newtheorem{remark}{Remark}
\begin{document}

\title[On amicable tuples]
{On amicable tuples}
\author[Y. Suzuki]{Yuta Suzuki}
\date{}

\subjclass[2010]{Primary 11A25, Secondary 11J25}
\keywords{Amicable pairs; Amicable tuples; Finiteness theorem.}

\begin{abstract}
For an integer $k\ge2$, a tuple of $k$ positive integers
$(M_i)_{i=1}^{k}$
is called an \textit{amicable $k$-tuple} if the equation
\[
\sigma(M_1)=\cdots=\sigma(M_k)=M_1+\cdots+M_k
\]
holds.
This is a generalization of amicable pairs.
An \textit{amicable pair} is a pair of distinct positive integers each of which is the sum of the proper divisors of the other.
Gmelin~(1917) conjectured that there is no relatively prime amicable pairs
and Artjuhov~(1975) and Borho~(1974) proved that for any fixed positive integer $K$,
there are only finitely many relatively prime amicable pairs $(M,N)$ with $\omega(MN)=K$.
Recently, Pollack~(2015) obtained an upper bound
\[
MN<(2K)^{2^{K^2}}
\]
for such amicable pairs. In this paper, we improve this upper bound to
\[
MN<\frac{\pi^2}{6}2^{4^K-2\cdot 2^K}
\]
and generalize this bound to some class of general amicable tuples.
\end{abstract}
\maketitle
\vspace{-5mm}

%%%%%%%%%%%%%%%%%%%%%%%%%%%%%%%%%%%%%%%%
%
% Section: Introduction
%
%%%%%%%%%%%%%%%%%%%%%%%%%%%%%%%%%%%%%%%%
\section{Introduction}

For an integer $k\ge2$, a tuple of $k$ positive integers
\[
(M_1,\,\ldots\,,M_k)
\]
is called an \textit{amicable $k$-tuple}~\cite{Dickson_triple} if the equation
\begin{equation}
\label{EQ:definition_amicable}
\sigma(M_1)=\cdots=\sigma(M_k)=M_1+\cdots+M_k
\end{equation}
holds, where $\sigma(n)$ is the usual divisor summatory function.
This is a generalization of amicable pairs.
An \textit{amicable pair} is a pair of distinct positive integers each of which is the sum of the proper divisors of the other.
For a pair of distinct positive integers~$(M,N)$,
this definition of amicable pair can be rephrased as
\[
\sigma(M)-M=N\quad\text{and}\quad\sigma(N)-N=M,
\]
which is equivalent to
\[
\sigma(M)=\sigma(N)=M+N
\]
so $(M,N)$ is an amicable pair if and only if $(M,N)$ is an amicable 2-tuple.

Although the history of amicable pairs can be traced back more than 2000 years ago
to Pythagoreans, their nature is still shrouded in mystery.
In 1917, Gmelin~\cite{Gmelin} noted that
there is no amicable pairs $(M,N)$ on the list at that time
for which $M$ and $N$ are relatively prime.
Based on this observation, he conjectured that there is no such relatively prime amicable pair.

As for this problem, Artjuhov~\cite{Artjuhov} and Borho~\cite{Borho} proved that
for any fixed integer $K\ge1$, there are only finitely many relatively prime amicable pairs $(M,N)$
with $\omega(MN)=K$ where $\omega(n)$ denote the number of distinct prime factors of $n$.
Recently, Pollack~\cite{Pollack_bound} obtained an explicit upper bound 
\begin{equation}
\label{EQ:Pollack}
MN<(2K)^{2^{K^2}}
\end{equation}
for relatively prime amicable pairs $(M,N)$, where $K=\omega(MN)$.
The main aim of this paper is to improve and generalize this result of Pollack.

Actually, not only for amicable pairs as in Gmelin's conjecture,
members of each amicable tuple seem to share relatively many common factors.
Although there are not so many examples of large amicable $k$-tuples for $k\ge4$,
it seems further that there is no amicable tuple whose greatest common divisor is 1.
For some technical and artificial reason,
we consider a much stronger condition on amicable tuples.
We define this condition not only for amicable tuples but for general tuples.
\begin{definition}
For an integer $k\ge2$, we say a tuple of distinct positive integers $(M_i)_{i=1}^k$ is \textit{anarchy} if
\[
(M_i,M_j\sigma(M_j))=1
\]
for all distinct $i,j$,
where $(a,b)$ denotes the greatest common divisor of $a$ and $b$.
\end{definition}
We also introduce a new class of tuples of integers,
which has been essentially introduced
by Kozek, Luca, Pollack and Pomerance~\cite{Harmonious}.
\begin{definition}
For an integer $k\ge2$, we say a tuple of positive integers $(M_i)_{i=1}^k$
is a \textit{harmonious $k$-tuple} if the equation
\begin{equation}
\label{EQ:fundamental_eq}
\frac{M_1}{\sigma(M_1)}+\cdots+\frac{M_k}{\sigma(M_k)}=1
\end{equation}
holds.
\end{definition}
In this paper, we generalize and improve Pollack's upper bound \eqref{EQ:Pollack} as follows.
\begin{theorem}
\label{Thm:main}
For any anarchy harmonious tuple $(M_i)_{i=1}^{k}$, we have
\[
M_1\cdots M_k<\frac{\pi^2}{6}\,2^{4^K-2\cdot2^K},
\]
where $K=\omega(M_1\cdots M_k)$.
\end{theorem}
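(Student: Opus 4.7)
The plan is to clear denominators in the harmonious equation, use the anarchy condition to obtain a tight coprimality structure, and then bound the primes of $M_1 \cdots M_k$ by an iterative argument that produces the claimed doubly-exponential dependence on $K$. First, write $M_i/\sigma(M_i) = a_i/b_i$ in lowest terms. Anarchy forces the $a_i$ to be pairwise coprime and $\gcd(a_i, b_j) = 1$ for $i \ne j$, since $a_i \mid M_i$, $b_j \mid \sigma(M_j)$, and $(M_i, \sigma(M_j)) = 1$. Setting $B := \prod_i b_i$, the harmonious identity \eqref{EQ:fundamental_eq} becomes the Diophantine equation
\[
\sum_{i=1}^{k} a_i \, \frac{B}{b_i} \;=\; B.
\]
Reducing modulo $b_l$ kills every term with $i \ne l$ (since $b_l \mid B/b_i$), so $a_l(B/b_l) \equiv 0 \pmod{b_l}$, and coprimality gives the structural constraint $b_l \mid \prod_{j \ne l} b_j$ for each $l$. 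In particular the denominators cannot introduce isolated new primes.

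Next, enumerate the primes of $M_1 \cdots M_k$ as $p_1 < \cdots < p_K$, and let $e_j$ and $\iota(j)$ be the exponent and the unique host-index of $p_j$. Reducing the cleared-denominator identity modulo a high power of $p_j$, anarchy ensures that $p_j$ can only divide $\sigma(M_{\iota(j)})$ among the $\sigma(M_l)$'s, so every other $\sigma(M_l)$ is a unit modulo $p_j^{e_j}$. This produces a congruence that bounds $p_j^{e_j}$ in terms of the $M_l$ and $\sigma(M_l)$ for $l \ne \iota(j)$. Ordering the primes appropriately and iterating yields recursive bounds of the form $p_j^{e_j} \le F(p_1^{e_1}, \ldots, p_{j-1}^{e_{j-1}})$ in which each step essentially squares the previous bound; solving the recursion over $K$ primes produces the exponent $4^K - 2\cdot 2^K$. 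The zeta value $\pi^2/6 = \zeta(2) = \prod_p (1 - p^{-2})^{-1}$ then enters as a global tail correction when estimating
\[
\prod_{j=1}^{K} \frac{\sigma(p_j^{e_j})}{p_j^{e_j}} \;=\; \prod_{i=1}^{k} \frac{\sigma(M_i)}{M_i},
\]
where the equality uses pairwise coprimality of the $M_i$.

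The main obstacle is sharpening the recursion in the prime-by-prime step: a crude implementation of the congruence argument only recovers Pollack's bound $(2K)^{2^{K^2}}$. To reach $2^{4^K - 2\cdot 2^K}$ one has to exploit both anarchy (keeping each $\sigma(M_i)$ from interacting with other $M_j$'s) and the structural divisibility $b_l \mid \prod_{j \ne l} b_j$ (preventing the denominators from spreading prime factors to too many new places). A secondary difficulty is that the generalization from amicable pairs, where the identity $\sigma(M_1) = \sigma(M_2) = M_1 + M_2$ collapses the whole system, to harmonious anarchy tuples requires the iteration to work uniformly in $k$ and in the distribution of the primes among the $M_i$.
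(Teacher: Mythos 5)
There is a genuine gap, and in fact you acknowledge it yourself: you write that a crude version of your prime-by-prime congruence argument ``only recovers Pollack's bound $(2K)^{2^{K^2}}$,'' and then you gesture at what would have to be exploited to improve it without actually supplying the mechanism. The phrase ``each step essentially squares the previous bound'' is exactly the thing that needs to be proved, and the congruence-modulo-$p_j^{e_j}$ framework you describe is the Pomerance--Pollack framework, which provably does \emph{not} square cleanly at each step because the modulus and the quantity being bounded are entangled. Your structural observation $b_l \mid \prod_{j\neq l} b_j$ is correct but is a very weak constraint; it does not by itself control how denominator primes proliferate, and the paper does not use it.

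What the paper actually does is abandon the congruence-reduction strategy entirely in favor of Heath-Brown's method, adapted through Nielsen's version. The engine is a pair of Diophantine-inequality lemmas (Lemmas 5 and 6) that take a sum of the form $\sum_i (b_i/a_i)\prod_{j\in J_i}(1-1/m_j)$ squeezed between two thresholds and conclude $a\prod_j m_j \le F_R(a+1)$ where $F_R(x)=x^{2^R}-x^{2^{R-1}}$; the proof compares the $m_j$ against the extremal sequence $(a+1)^{2^{j-1}}+1$ via a rearrangement-type inequality (Lemma 4, a refinement of Cook's lemma). This is where the doubling of the exponent actually comes from, and it is established by an exact extremal comparison rather than by iterating congruences. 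An induction lemma (Lemma 8) then peels primes from $U_i$ into $V_i$ step by step, tracking a set $\mathcal{S}$ of already-processed primes and a set $\mathcal{P}'$ of newly absorbed primes, with the key divisibility input being Lemma 7 (which is where anarchy enters, via the largest prime in $\mathcal{S}$). Carrying out the induction (Lemma 9) gives $\sigma(M_1\cdots M_k)\Phi(\mathcal{P})/\Pi(\mathcal{P}) \le F_{2K}(2)\Pi(\mathcal{P})^{-2}$, which is combined with a Chen--Tang-type bound (Lemma 10) by splitting on whether $\Pi(\mathcal{P}) > 2^{2^K}$, to yield the factor $2^{-2\cdot 2^K}$. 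The $\pi^2/6$ then arrives, as you correctly anticipate, from $\prod_{p^e\parallel M_1\cdots M_k}(1-p^{-(e+1)})^{-1} < \prod_p(1-p^{-2})^{-1}$. So your instincts about where the coprimality is used and where $\zeta(2)$ enters are sound, but the central recursion---the part you flag as ``the main obstacle''---is replaced in the paper by a fundamentally different mechanism that you would need to discover and prove to complete this argument.
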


Note that if $(M_i)_{i=1}^{k}$ is an amicable tuple, then
\[
1
=\frac{M_1}{M_1+\cdots+M_k}+\cdots+\frac{M_k}{M_1+\cdots+M_k}
=\frac{M_1}{\sigma(M_1)}+\cdots+\frac{M_k}{\sigma(M_k)}
\]
so every amicable tuple is harmonious.
Also, for any amicable tuple $(M_i)_{i=1}^{k}$,
\[
(M_i,M_j\sigma(M_j))=(M_i,M_j(M_1+\ldots+M_k))
\]
so an amicable tuple $(M_i)_{i=1}^{k}$ is anarchy
if and only if $M_i$ are pairwise coprime and
$M_1\cdots M_k$ is coprime to $M_1+\cdots+M_k$.
Moreover, an amicable pair $(M,N)$ is anarchy if and only if $M$ and $N$ are relatively prime since
\[
(M(M+N),N)=1\ \Longleftrightarrow\ (M,N)=1\ \Longleftrightarrow\ (M,N(M+N))=1.
\]
Thus Theorem \ref{Thm:main} leads the following corollaries.
\begin{theorem}
\label{Thm:amicable}
For any relatively prime amicable pair $(M,N)$, we have
\[
MN<\frac{\pi^2}{6}\,2^{4^K-2\cdot2^K},
\]
where $K=\omega(MN)$.
\end{theorem}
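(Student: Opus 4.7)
The plan is to derive Theorem~\ref{Thm:amicable} as a direct corollary of Theorem~\ref{Thm:main}, using the observations already laid out in the paragraph preceding the statement. Concretely, given a relatively prime amicable pair $(M,N)$, I would verify the two hypotheses of Theorem~\ref{Thm:main}, namely that $(M,N)$ viewed as a $2$-tuple is (i) harmonious and (ii) anarchy, and then quote the bound with $k=2$.

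For step (i), the amicable condition $\sigma(M)=\sigma(N)=M+N$ gives
\[
\frac{M}{\sigma(M)}+\frac{N}{\sigma(N)}
=\frac{M}{M+N}+\frac{N}{M+N}=1,
\]
so $(M,N)$ satisfies the harmonious equation \eqref{EQ:fundamental_eq}. For step (ii), since $\sigma(M)=\sigma(N)=M+N$, the anarchy condition reduces to
\[
(M,N(M+N))=1\quad\text{and}\quad (N,M(M+N))=1,
\]
and the chain of equivalences
\[
(M(M+N),N)=1\ \Longleftrightarrow\ (M,N)=1\ \Longleftrightarrow\ (M,N(M+N))=1
\]
recorded in the excerpt shows that both of these are equivalent to $(M,N)=1$, which holds by assumption. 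Hence $(M,N)$ is an anarchy harmonious tuple.

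With both hypotheses verified, applying Theorem~\ref{Thm:main} to the $2$-tuple $(M,N)$ yields
\[
MN<\frac{\pi^2}{6}\,2^{4^K-2\cdot2^K},\qquad K=\omega(MN),
\]
which is exactly the desired bound. There is no real obstacle here: all the substantive content has been shifted to Theorem~\ref{Thm:main}, and the present theorem is essentially a specialization whose only task is to check that the two defining conditions of an anarchy harmonious tuple follow from the classical amicable-pair relation and coprimality. The only thing to be careful about is ensuring that the equivalences used in step (ii) are applied symmetrically in $M$ and $N$, but this is immediate.
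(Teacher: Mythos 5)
Your proof matches the paper's own argument: Theorem~\ref{Thm:amicable} is indeed obtained exactly as you describe, by checking that a relatively prime amicable pair is an anarchy harmonious $2$-tuple (using the amicable relation for the harmonious identity and the coprimality equivalences for anarchy) and then invoking Theorem~\ref{Thm:main}. The verification is complete and correct.
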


\begin{theorem}
\label{Thm:amicable_tuple}
For any amicable tuple $(M_i)_{i=1}^{k}$ satisfying
\begin{equation}
\label{EQ:tuple_cond}
M_1,\ldots,M_k\text{ are pairwise coprime and }\,
(M_1\cdots M_k,M_1+\cdots+M_k)=1,
\end{equation}
we have
\[
M_1\cdots M_k<\frac{\pi^2}{6}\,2^{4^K-2\cdot2^K},
\]
where $K=\omega(M_1\cdots M_k)$.
\end{theorem}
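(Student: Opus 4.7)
The plan is to deduce Theorem~\ref{Thm:amicable_tuple} as a direct corollary of Theorem~\ref{Thm:main}: I verify that every amicable tuple satisfying \eqref{EQ:tuple_cond} is both harmonious and anarchy, and then invoke Theorem~\ref{Thm:main}. Both verifications are essentially unpacking of the definitions, and both are already hinted at in the discussion following Theorem~\ref{Thm:main}.

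First, harmoniousness is automatic for any amicable tuple, independent of \eqref{EQ:tuple_cond}: writing $S := M_1 + \cdots + M_k$, the defining relations $\sigma(M_i) = S$ give
\[
\sum_{i=1}^{k} \frac{M_i}{\sigma(M_i)} \;=\; \frac{1}{S}\sum_{i=1}^{k} M_i \;=\; 1,
\]
which is exactly \eqref{EQ:fundamental_eq}. Next, for anarchy I must check that the $M_i$ are distinct positive integers and that $(M_i, M_j\sigma(M_j)) = 1$ for all distinct $i,j$. Using $\sigma(M_j) = S$, the gcd condition becomes $(M_i, M_j S) = 1$, which splits into $(M_i, M_j) = 1$ (pairwise coprimality, from the first clause of \eqref{EQ:tuple_cond}) and $(M_i, S) = 1$ (a consequence of $(M_1\cdots M_k, S) = 1$, the second clause of \eqref{EQ:tuple_cond}). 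Distinctness of the $M_i$ is then forced by pairwise coprimality together with $k\ge 2$: if $M_i = M_j$ for some $i\ne j$, then pairwise coprimality would force $M_i = 1$, whence $\sigma(M_i) = 1 \ne S$ since $S\ge k\ge 2$, a contradiction.

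With both properties verified, Theorem~\ref{Thm:main} applied to $(M_i)_{i=1}^{k}$ delivers exactly the inequality claimed in Theorem~\ref{Thm:amicable_tuple}. In this sense there is no genuine obstacle in this particular proof: all of the analytic and combinatorial effort is concentrated in Theorem~\ref{Thm:main}, to which Theorem~\ref{Thm:amicable_tuple} has been cleanly set up as a formal corollary.
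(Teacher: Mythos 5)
Your proposal matches the paper's argument exactly: the paper deduces Theorem~\ref{Thm:amicable_tuple} as a corollary of Theorem~\ref{Thm:main} by observing (in the discussion immediately following Theorem~\ref{Thm:main}) that every amicable tuple is harmonious and that condition~\eqref{EQ:tuple_cond} is equivalent to anarchy via $\sigma(M_j)=M_1+\cdots+M_k$. Your extra check that the $M_i$ are automatically distinct is a small but legitimate refinement that the paper leaves implicit.
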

\noindent
Therefore, Theorem \ref{Thm:amicable} improves the upper bound \eqref{EQ:Pollack}.

Even without divisibility condition like relative primality or anarchy,
Borho~\cite{Borho2} proved an upper bound for amicable pairs
in terms of $\Omega(n)$, the number of prime factors of $n$ counted with multiplicity.
He also dealt with \textit{unitary amicable pairs}.
A unitary amicable pair is a pair of positive integers $(M,N)$ satisfying the equation
\[
\sigma^\ast(M)=\sigma^\ast(N)=M+N,\quad
\sigma^\ast(n):=\sum_{d\parallel n}d,
\]
where $d\parallel n$ means $d|n$ and $(d,n/d)=1$. Borho's upper bound is
\[
MN<L^{2^{L}}
\]
for any amicable or unitary amicable pairs $(M,N)$ with $L=\Omega(MN)$ for amicable pairs
and $L=\omega(M)+\omega(N)$ for unitary amicable pairs.
Indeed, we can see a prototype of our Lemma \ref{Lem:HB_ineq1} and \ref{Lem:HB_ineq2} on Diophantine equation
in Satz 1 of Borho~\cite{Borho2}. By introducing our lemmas in Borho's argument,
we can improve and generalize Borho's theorem.
As Kozek, Luca, Pollack and Pomerance~\cite{Harmonious} remarked,
what we can deal with are not only amicable tuples but harmonious tuples.
Thus we first introduce the unitary analogue of harmonious tuples.
\begin{definition}
For an integer $k\ge2$, we say a tuple of positive integers $(M_i)_{i=1}^k$
is a \textit{unitary harmonious tuple} if the equation
\begin{equation}
\label{EQ:fundamental_eq_unitary}
\frac{M_1}{\sigma^\ast(M_1)}+\cdots+\frac{M_k}{\sigma^\ast(M_k)}=1
\end{equation}
holds.
\end{definition}

Then our theorem of the Borho-type is the following.
\begin{theorem}
\label{Thm:Omega}
For any harmonious tuple $(M_i)_{i=1}^{k}$, we have
\[
M_1\cdots M_k\le k^{-k}(2^{2^{L}}-2^{2^{L-1}}),
\]
where $L=\Omega(M_1\cdots M_k)$. For any unitary harmonious tuple $(M_i)_{i=1}^{k}$, we again have
the same upper bound but $L$ is replaced by $L=\omega(M_1)+\cdots+\omega(M_k)$.
\end{theorem}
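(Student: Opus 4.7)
The plan is to reduce both cases of Theorem \ref{Thm:Omega} to a Sylvester-type Diophantine identity involving elementary factors of the form $1-1/x$, and then to apply the Borho-type lemmas of the paper (Lemma \ref{Lem:HB_ineq1} and Lemma \ref{Lem:HB_ineq2}) in essentially the same way as in the proof of Theorem \ref{Thm:main}, but without the anarchy hypothesis. The upshot of dropping the coprimality condition is that we must count factors by $\Omega$ (respectively by $\omega(M_1)+\cdots+\omega(M_k)$), rather than by $\omega$.

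First I rewrite each ratio $M_i/\sigma(M_i)$ as a product of elementary factors. In the unitary case this is immediate from multiplicativity: for each prime power $p^a \parallel M_i$,
\[
\frac{p^a}{\sigma^{\ast}(p^a)} = \frac{p^a}{p^a+1} = 1 - \frac{1}{p^a+1},
\]
so $M_i/\sigma^{\ast}(M_i)$ factors as a product of exactly $\omega(M_i)$ such terms. In the standard case I use the telescoping identity
\[
\frac{p^a}{\sigma(p^a)} = \prod_{j=1}^{a} \frac{p\,\sigma(p^{j-1})}{\sigma(p^j)} = \prod_{j=1}^{a} \left(1 - \frac{1}{\sigma(p^j)}\right),
\]
which follows from $\sigma(p^j)-1 = p\,\sigma(p^{j-1})$ and telescoping in $j$. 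Multiplicativity then yields an expression for $M_i/\sigma(M_i)$ as a product of $\Omega(M_i)$ elementary factors. In both cases, summing over $i$, the total number of factors is exactly $L$.

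Substituting into the harmonious identity converts it into an equation of the shape
\[
\sum_{i=1}^{k} \prod_{\alpha \in I_i} \left(1 - \frac{1}{x_\alpha}\right) = 1, \qquad |I_1|+\cdots+|I_k| = L,
\]
to which Lemma \ref{Lem:HB_ineq1} (or its variant Lemma \ref{Lem:HB_ineq2}) applies. These lemmas, rooted in the Sylvester-type recursion $s_{n+1} = s_n(s_n-1)+1$, produce a doubly-exponential bound on the relevant product of denominators $x_\alpha$; after regrouping, one obtains
\[
\prod_{i=1}^{k} \sigma(M_i) \le 2^{2^{L}} - 2^{2^{L-1}}
\]
(and analogously for $\sigma^{\ast}$). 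To convert this into a bound on $\prod_i M_i$, I use the harmonious condition itself: since $\sum_i M_i/\sigma(M_i) = 1$, the harmonic mean of the ratios $\sigma(M_i)/M_i$ equals $k$, so by AM-GM-HM their geometric mean is at least $k$, giving $\prod_i \sigma(M_i) \ge k^k \prod_i M_i$. Combining the two inequalities yields $\prod_i M_i \le k^{-k}\bigl(2^{2^{L}} - 2^{2^{L-1}}\bigr)$, as required.

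The main obstacle is the precise invocation of the Diophantine lemma. The elementary denominators $\sigma(p^j)$ arising from different $M_i$ are not independent when a prime $p$ divides several members of the tuple, so some care is required to feed the data into the lemma in a shape that extracts the exact form $2^{2^L}-2^{2^{L-1}}$ of the bound. Structurally, however, this is the very same obstacle already overcome in the proof of Theorem \ref{Thm:main}; replacing the anarchy condition by no condition only costs us a switch from counting distinct primes to counting prime factors with multiplicity (or counting $\sum \omega(M_i)$ in the unitary case), which is exactly reflected in the passage from the exponent $4^K-2\cdot 2^K$ to the exponent $2^L - 2^{L-1}$.
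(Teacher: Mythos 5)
Your proposal is correct and follows essentially the same route as the paper: factor each $M_i/\sigma(M_i)$ (resp.\ $M_i/\sigma^{\ast}(M_i)$) into $\Omega(M_i)$ (resp.\ $\omega(M_i)$) elementary terms $1-1/m_j$, observe that deleting a factor only increases the left-hand side of the harmonious identity so Lemma~\ref{Lem:HB_ineq1} applies with $a_i=b_i=1$ to give $\prod_i\sigma(M_i)\le F_L(2)$, and finish with the mean-inequality argument (your GM${}\ge{}$HM phrasing is equivalent to the paper's direct AM--GM on $\sum_i M_i/\sigma(M_i)=1$). The ``obstacle'' you flag about shared primes is in fact no obstacle here, since Lemma~\ref{Lem:HB_ineq1} allows repeated $m_j$'s and needs no coprimality; the paper notes this explicitly.
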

We prove Theorem~\ref{Thm:Omega} in Section~\ref{Section:Omega}.

The above problems on the upper bound of harmonious or amicable tuples
are analogues of a similar problem in the context of odd perfect numbers,
which has been studied since a long time ago.
A positive integer $N$ is called a \textit{perfect number} if its sum of all proper divisors
is equal to $N$ itself, i.e.~if $\sigma(N)=2N$.
It is also long-standing mystery whether or not there is an odd perfect number.
The finiteness theorem like the Artjuhov--Borho theorem
was proved for odd perfect numbers by Dickson~\cite{Dickson_bound} in 1913.
However, it took more than 60 years to get the explicit upper bound.
The first explicit upper bound for odd perfect numbers was achieved
by Pomerance~\cite{Pomerance_bound} in 1977. Pomerance obtained an upper bound
\[
N<(4K)^{(4K)^{2^{K^2}}}
\]
for an odd perfect number with $K=\omega(N)$.
Note that by modifying the method of Pomerance slightly,
we may improve this upper bound to
\[
N<(4K)^{2^{K^2}}
\]
as remarked in Lemma 2 and Remark of \cite{Pollack_bound}.
Further improvement on this upper bound
was given by Heath-Brown~\cite{HB_OPN} by using a new method.
Heath-Brown's upper bound is
\[
N<4^{4^{K}}.
\]
Heath-Brown's method has been further developed by several authors.
The list of the world records of upper bounds based on Heath-Bronws's method is here:
\[
\begin{array}{ll}
C^{4^{K}}&(\text{Cook~\cite{Cook}}),\quad(C=(195)^{1/7}=2.123\ldots\,),\\
2^{4^{K}}&(\text{Nielsen~\cite{Nielsen}}),\\
2^{4^{K}-2^K}&(\text{Chen and Tang~\cite{Chen_Tang}}),\\
2^{(2^K-1)^2}&(\text{Nielsen~\cite{Nielsen_New}}),
\end{array}
\]
where the last result of Nielsen is the current best result.

The method of Pollack~\cite{Pollack_bound} was mainly based on the method of Pomerance.
And Pollack~\cite[p.~38]{Pollack_bound} suggested that it would be interesting to find
whether the method of Heath-Brown is available in the context of amicable pairs.
Indeed, the method we use in this paper mainly follows a version of Heath-Brown's method
given by Nielsen~\cite{Nielsen_New} and Theorem \ref{Thm:amicable}
corresponds to Nielsen's latest bound
on odd perfect numbers. Thus, this paper gives one possible answer to Pollack's suggestion.

Also, Pollack~\cite[p.~680]{Pollack_kin} asked to find a suitable condition
to obtain a finiteness theorem for amicable tuples or sociable numbers.
Our condition \eqref{EQ:tuple_cond} in Theorem~\ref{Thm:amicable_tuple}
can be, though it seems too strong, one of partial answers to his question.
However, the current author have no idea for the same problem with sociable numbers.

It would be interesting to note that there are many examples of harmonious pairs
which are relatively prime but not anarchy.
In order to list up such pairs, we used a \texttt{C} program,
which is based on a program provided by Yuki Yoshida~\cite{yos}.
By using this program, we can list up all $2566$ relatively prime harmonious pairs
among all $49929$ harmonious pairs $(M,N)$ up to $10^8$ in the sense $M\le N\le10^8$,
and we find none of these examples is anarchy.
For interested readers, we listed up all $30$ relatively prime harmonious pairs up to $10^5$ in Table~\ref{Table:PHP1}
and listed the number of harmonious and relatively prime harmonious pairs in several ranges in Table~\ref{Table:PHP2}
This numerical search shows that Theorem~\ref{Thm:main} captures more pairs in its scope than Gmelin's conjecture.
Therefore, it is natural to ask:~are there anarchy harmonious tuples?
Surprisingly, by continuing the numerical search, we find an \textit{anarchy in the harmony},
i.e.~an anarchy harmonious pair
\[
(M,N)=(64,173369889),
\]
which is the only anarchy harmonious pair with $M\le N\le10^9$.
Note that
\[
64=2^6,\quad
173369889=3^4\times7^2\times11^2\times19^2,\quad
\]
\[
\sigma(64)=127,\quad
\sigma(173369889)=3^2\times7\times11^2\times19^2\times127.
\]
The next natural problem may be: how many anarchy harmonious pairs are there?
Also, the above observations indicates some possibility to improve Theorem \ref{Thm:main}
by introducing a new condition stronger than anarchy.

\begin{table}[htb]
\renewcommand{\arraystretch}{1.06}
\centering
\caption{All coprime harmonious pairs $(M,N)$ with $M\le N\le 10^5$}
\label{Table:PHP1}
\begin{tabular}{|cc|cc||cc|}\hline
$M$ & $N$ & \multicolumn{2}{c||}{Factorization of $M$ and $N$} & $(M,\sigma(N))$ & $(\sigma(M),N)$\\ \hline \hline
$135$ & $3472$ & $3^3 \times 5$ & $2^4 \times 7 \times 31$ & $1$ & $16$ \\
$135$ & $56896$ & $3^3 \times 5$ & $2^6 \times 7 \times 127$ & $1$ & $16$ \\
$285$ & $45136$ & $3 \times 5 \times 19$ & $2^4 \times 7 \times 13 \times 31$ & $1$ & $16$ \\
$315$ & $51088$ & $3^2 \times 5 \times 7$ & $2^4 \times 31 \times 103$ & $1$ & $16$ \\
$345$ & $38192$ & $3 \times 5 \times 23$ & $2^4 \times 7 \times 11 \times 31$ & $3$ & $16$ \\
$868$ & $1485$ & $2^2 \times 7 \times 31$ & $3^3 \times 5 \times 11$ & $4$ & $1$ \\
$1204$ & $4455$ & $2^2 \times 7 \times 43$ & $3^4 \times 5 \times 11$ & $4$ & $11$ \\
$1683$ & $3500$ & $3^2 \times 11 \times 17$ & $2^2 \times 5^3 \times 7$ & $3$ & $4$ \\
$1683$ & $62000$ & $3^2 \times 11 \times 17$ & $2^4 \times 5^3 \times 31$ & $3$ & $8$ \\
$2324$ & $9945$ & $2^2 \times 7 \times 83$ & $3^2 \times 5 \times 13 \times 17$ & $28$ & $3$ \\
$3556$ & $63855$ & $2^2 \times 7 \times 127$ & $3^3 \times 5 \times 11 \times 43$ & $4$ & $1$ \\
$4455$ & $21328$ & $3^4 \times 5 \times 11$ & $2^4 \times 31 \times 43$ & $11$ & $8$ \\
$4845$ & $7084$ & $3 \times 5 \times 17 \times 19$ & $2^2 \times 7 \times 11 \times 23$ & $3$ & $4$ \\
$5049$ & $65968$ & $3^3 \times 11 \times 17$ & $2^4 \times 7 \times 19 \times 31$ & $1$ & $16$ \\
$6244$ & $43875$ & $2^2 \times 7 \times 223$ & $3^3 \times 5^3 \times 13$ & $28$ & $1$ \\
$6244$ & $90675$ & $2^2 \times 7 \times 223$ & $3^2 \times 5^2 \times 13 \times 31$ & $28$ & $1$ \\
$6675$ & $33488$ & $3 \times 5^2 \times 89$ & $2^4 \times 7 \times 13 \times 23$ & $3$ & $8$ \\
$7155$ & $13244$ & $3^3 \times 5 \times 53$ & $2^2 \times 7 \times 11 \times 43$ & $3$ & $4$ \\
$9945$ & $41168$ & $3^2 \times 5 \times 13 \times 17$ & $2^4 \times 31 \times 83$ & $3$ & $8$ \\
$12124$ & $84825$ & $2^2 \times 7 \times 433$ & $3^2 \times 5^2 \times 13 \times 29$ & $28$ & $1$ \\
$13275$ & $81424$ & $3^2 \times 5^2 \times 59$ & $2^4 \times 7 \times 727$ & $1$ & $4$ \\
$13965$ & $23312$ & $3 \times 5 \times 7^2 \times 19$ & $2^4 \times 31 \times 47$ & $3$ & $16$ \\
$24327$ & $75460$ & $3^3 \times 17 \times 53$ & $2^2 \times 5 \times 7^3 \times 11$ & $9$ & $20$ \\
$31724$ & $61335$ & $2^2 \times 7 \times 11 \times 103$ & $3^2 \times 5 \times 29 \times 47$ & $4$ & $3$ \\
$32835$ & $92456$ & $3 \times 5 \times 11 \times 199$ & $2^3 \times 7 \times 13 \times 127$ & $15$ & $8$ \\
$34485$ & $37492$ & $3 \times 5 \times 11^2 \times 19$ & $2^2 \times 7 \times 13 \times 103$ & $1$ & $28$ \\
$52700$ & $68211$ & $2^2 \times 5^2 \times 17 \times 31$ & $3^2 \times 11 \times 13 \times 53$ & $4$ & $9$ \\
$55341$ & $58900$ & $3^2 \times 11 \times 13 \times 43$ & $2^2 \times 5^2 \times 19 \times 31$ & $1$ & $4$ \\
$60515$ & $78864$ & $5 \times 7^2 \times 13 \times 19$ & $2^4 \times 3 \times 31 \times 53$ & $1$ & $48$ \\
$62992$ & $63855$ & $2^4 \times 31 \times 127$ & $3^3 \times 5 \times 11 \times 43$ & $16$ & $1$ \\
\hline
\end{tabular}
\end{table}

\begin{table}[htb]
\renewcommand{\arraystretch}{1.06}
\centering
\setlength{\tabcolsep}{4pt}
\caption{Number of harmonious pairs $(M,N)$ with $M\le N\le 10^k$}
\label{Table:PHP2}
\begin{tabular}{|c||ccccccccc|}\hline
 & $10$ & $\phantom{^2}10^2$ & $\phantom{^3}10^3$ & $\phantom{^4}10^4$ &
 $\phantom{^5}10^5$ & $\phantom{^6}10^6$ & $\phantom{^7}10^7$ & $\phantom{^8}10^8$ & $\phantom{^9}10^9$\\ \hline \hline
Harmonious & $1$ & $10$ & $55$ & $252$ & $983$ & $3666$ & $13602$ & $49929$ & $176453$\\
Coprime harmonious & $0$ & $0$ & $0$ & $6$ & $30$ & $133$ & $631$ & $2566$ & $10013$\\
\hline
\end{tabular}
\end{table}

%%%%%%%%%%%%%%%%%%%%%%%%%%%%%%%%%%%%%%%%
%
% Section: Notation
%
%%%%%%%%%%%%%%%%%%%%%%%%%%%%%%%%%%%%%%%%
\section{Notation}
We denote the greatest common divisor of positive integers $a$ and $b$ by $(a,b)$,
which we may distinguish from the notation for a pair of integers $(M,N)$ by the context.
For positive integers $d$ and $n$, we write $d\parallel n$ if $d\mid n$ and $(d,n/d)=1$.

For any finite set $\mathcal{S}$ of integers, we let
\[
\Pi(\mathcal{S})=\prod_{m\in \mathcal{S}}m,\quad
\Phi(\mathcal{S})=\prod_{m\in \mathcal{S}}(m-1),\quad
\Psi(\mathcal{S})=\Pi(\mathcal{S})\Phi(\mathcal{S}).
\]
Following the notation of Nielsen \cite{Nielsen_New}, we let
\[
F_r(x)=x^{2^{r}}-x^{2^{r-1}}
\]
for an integer $r\ge1$ and a real number $x\ge1$ and we let $F_0(x)=x-1$ for the case $r=0$ and $x\ge1$.
Actually, we will not use the full power of this function $F_r(x)$ for the proof of Theorem \ref{Thm:main},
but we introduce them for trying to give a better bound in lemmas on Diophantine inequalities.
We prepare two lemmas on $F_r(x)$.
%%%%%%%%%%%%%%%%%%%%%
%%
%% F-Lemma I
%%
%%%%%%%%%%%%%%%%%%%%%
\begin{lemma}
\label{Lem:F_increasing}
For any integer $r\ge0$, $F_r(x)$ is increasing as a function of $x\ge1$.
\end{lemma}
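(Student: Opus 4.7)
The plan is to handle the case $r=0$ separately, where $F_0(x)=x-1$ is trivially increasing on $[1,\infty)$, and then reduce the case $r\ge1$ to a single elementary monotonicity fact about the parabola $y^2-y$.

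For $r\ge1$, I would substitute $y=x^{2^{r-1}}$ so that
\[
F_r(x)=x^{2^r}-x^{2^{r-1}}=y^2-y=y(y-1).
\]
Since $x\ge1$, the substitution $y=x^{2^{r-1}}$ is itself an increasing function of $x$ with $y\ge1$. It then suffices to observe that $g(y)=y^2-y$ is increasing on $[1,\infty)$: indeed $g'(y)=2y-1\ge1>0$ there. Composing two increasing functions yields that $F_r$ is increasing on $[1,\infty)$.

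Alternatively, one can argue directly by differentiation, writing
\[
F_r'(x)=2^r x^{2^r-1}-2^{r-1}x^{2^{r-1}-1}=2^{r-1}x^{2^{r-1}-1}\bigl(2x^{2^{r-1}}-1\bigr),
\]
which is strictly positive for $x\ge1$ since each factor is at least $1$.

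There is no real obstacle here; the lemma is a routine preparatory fact used to justify later monotonicity manipulations with $F_r$. The only point of care is to make sure the argument covers $r=0$, where the formula $F_r(x)=x^{2^r}-x^{2^{r-1}}$ would degenerate to $x-x^{1/2}$ if naively applied, so the separate definition $F_0(x)=x-1$ must be acknowledged and treated on its own.
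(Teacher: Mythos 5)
Your argument is correct and is essentially the paper's own: the substitution $y=x^{2^{r-1}}$ yielding $F_r(x)=y(y-1)$ is exactly the paper's factorization $F_r(x)=x^{2^{r-1}}(x^{2^{r-1}}-1)$, and both proofs then read off monotonicity from it (the paper via the product of two increasing non-negative factors, you via composition with the increasing parabola $y^2-y$ on $[1,\infty)$). The separate treatment of $r=0$ matches the paper as well.
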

\begin{proof}
This is obvious for $r=0$ and also obvious for $r\ge1$ from the factorization
\begin{equation}
\label{EQ:F_factorization}
F_r(x)=x^{2^{r-1}}(x^{2^{r-1}}-1)
\end{equation}
since both of $x^{2^{r-1}}$ and $(x^{2^{r-1}}-1)$ are increasing and non-negative for $x\ge1$.
\end{proof}

%%%%%%%%%%%%%%%%%%%%%
%%
%% F-scaling
%%
%%%%%%%%%%%%%%%%%%%%%
\begin{lemma}
\label{Lem:F_scaling}
For any integer $r\ge1$ and real numbers $\alpha,x\ge1$, we have
\[
F_r(x)\le\alpha^{-2^{r-1}}F_r(\alpha x)\le\alpha^{-1}F_r(\alpha x).
\]
\end{lemma}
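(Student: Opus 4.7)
The plan is to reduce both inequalities to elementary algebraic manipulations using the factorization
\[
F_r(x)=x^{2^{r-1}}(x^{2^{r-1}}-1)
\]
already recorded in the proof of Lemma~\ref{Lem:F_increasing}. Applying this factorization to $\alpha x$ in place of $x$ gives
\[
F_r(\alpha x)=(\alpha x)^{2^{r-1}}\bigl((\alpha x)^{2^{r-1}}-1\bigr)=\alpha^{2^{r-1}}x^{2^{r-1}}\bigl(\alpha^{2^{r-1}}x^{2^{r-1}}-1\bigr),
\]
so after dividing by $\alpha^{2^{r-1}}$ one finds
\[
\alpha^{-2^{r-1}}F_r(\alpha x)=x^{2^{r-1}}\bigl(\alpha^{2^{r-1}}x^{2^{r-1}}-1\bigr).
\]

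For the first inequality, I would compare this with $F_r(x)=x^{2^{r-1}}(x^{2^{r-1}}-1)$: since $\alpha\ge1$ we have $\alpha^{2^{r-1}}\ge1$, hence $\alpha^{2^{r-1}}x^{2^{r-1}}-1\ge x^{2^{r-1}}-1$, and multiplying by the nonnegative factor $x^{2^{r-1}}$ yields $F_r(x)\le\alpha^{-2^{r-1}}F_r(\alpha x)$.

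For the second inequality, the hypotheses $r\ge1$ and $\alpha\ge1$ give $2^{r-1}\ge1$ and $\alpha^{2^{r-1}}\ge\alpha$, so $\alpha^{-2^{r-1}}\le\alpha^{-1}$. Since $\alpha x\ge1$ implies $F_r(\alpha x)\ge0$ (again by the factorization), multiplying both sides by $F_r(\alpha x)$ preserves the direction of the inequality and yields $\alpha^{-2^{r-1}}F_r(\alpha x)\le\alpha^{-1}F_r(\alpha x)$.

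Neither step presents a genuine obstacle; the content is merely the observation that $F_r$ is homogeneous-like up to the correction term $-1$, and that the correction works in the favourable direction when $\alpha\ge1$. The slightly delicate point is simply to keep track of the exponent $2^{r-1}$ versus $1$ in comparing the two bounds, which is why the hypothesis $r\ge1$ (rather than $r\ge0$) is needed for the second inequality.
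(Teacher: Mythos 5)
Your proof is correct and follows essentially the same route as the paper: both use the factorization $F_r(x)=x^{2^{r-1}}(x^{2^{r-1}}-1)$ to write $\alpha^{-2^{r-1}}F_r(\alpha x)=x^{2^{r-1}}(\alpha^{2^{r-1}}x^{2^{r-1}}-1)$ and then compare termwise using $\alpha\ge1$. The paper leaves the second inequality implicit, whereas you spell it out via $\alpha^{-2^{r-1}}\le\alpha^{-1}$ and $F_r(\alpha x)\ge0$, which is a harmless bit of extra care.
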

\begin{proof}
By \eqref{EQ:F_factorization}, we have
\[
\alpha^{-2^{r-1}}F_r(\alpha x)
=
x^{2^{r-1}}\left((\alpha x)^{2^{r-1}}-1\right)
\ge
x^{2^{r-1}}\left(x^{2^{r-1}}-1\right)
=
F_r(x).
\]
This completes the proof.
\end{proof}

%%%%%%%%%%%%%%%%%%%%%%%%%%%%%%%%%%%%%%%%
%
% Section: Diophantine inequalities
%
%%%%%%%%%%%%%%%%%%%%%%%%%%%%%%%%%%%%%%%%
\section{Lemmas on Diophantine inequalities}
\label{Section:Diophantine_inequality}
In this section,
we prove variants of Heath-Brown's lemma~\cite[Lemma 1]{HB_OPN}
on Diophantine inequalities related to the equation \eqref{EQ:fundamental_eq}. We need to introduce some modification
suitable for applications to amicable tuples.
Our proof of Theorem \ref{Thm:main} heavily relies on the equation \eqref{EQ:fundamental_eq},
or its generalization
\begin{equation}
\label{EQ:fundamental_eq_amicable}
\frac{b_1}{a_1}\frac{M_1}{\sigma(M_1)}+\cdots+\frac{b_k}{a_k}\frac{M_k}{\sigma(M_k)}=1,
\end{equation}
where $a_i,b_i\ge1$ are integers.
This equation is not so flexible as the equation
\begin{equation}
\label{EQ:fundamental_eq_OPN}
\frac{\sigma(M)}{M}=\frac{a}{b},
\end{equation}
which is used in the context of perfect numbers.

Actually, in the induction steps of Heath-Brown's method,
there are two point to use such Diophantine equation or corresponding inequalities.
For the first point, we use Diophantine inequality with its original form.
On the other hand, in the second point,
we need to take the ``reciprocal'' of the same Diophantine inequality.
For odd perfect numbers,
we can take the reciprocal of \eqref{EQ:fundamental_eq_OPN}
without any big change of its shape.
However, for amicable pairs, we need to take the reciprocal of each terms
in \eqref{EQ:fundamental_eq_amicable}, which transforms the equation into slightly different shape.
Thus, we prepare two different lemmas.

We start with Lemma 2 of Cook~\cite{Cook} in its refined form.
This refinement was given by Goto~\cite[Lemma 2.4]{Goto}.
Nielsen~\cite[Lemma 1.2]{Nielsen_New} also proved this refinement
in even stronger form, which allows us to have some equalities between $x_i$.
We also need a variant of Cook's lemma given by Goto~\cite[Lemma 2.5]{Goto}.
For completeness, we give a proof of these lemmas following the argument of Nielsen~\cite{Nielsen_New}.
%%%%%%%%%%%%%%%%%%%%
%
% Basic inequality
%
%%%%%%%%%%%%%%%%%%%%
\begin{lemma}
\label{Lem:pre_Cook}
For real numbers $0<x_1\le x_2$ and $0<\alpha<1$, we have
\[
\left(1-\frac{1}{x_1}\right)\left(1-\frac{1}{x_2}\right)
>\left(1-\frac{1}{x_1\alpha}\right)\left(1-\frac{1}{x_2\alpha^{-1}}\right)
\]
and
\[
\left(1+\frac{1}{x_1}\right)\left(1+\frac{1}{x_2}\right)
<\left(1+\frac{1}{x_1\alpha}\right)\left(1+\frac{1}{x_2\alpha^{-1}}\right).
\]
\end{lemma}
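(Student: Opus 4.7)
The plan is to reduce both inequalities to a single elementary comparison. Set $y_1 = x_1 \alpha$ and $y_2 = x_2 \alpha^{-1}$, so that $y_1 < x_1$, $y_2 > x_2$, and crucially $y_1 y_2 = x_1 x_2$. Then I would expand each pair of factors:
\[
\Bigl(1\pm\tfrac{1}{x_1}\Bigr)\Bigl(1\pm\tfrac{1}{x_2}\Bigr)
= 1\pm\Bigl(\tfrac{1}{x_1}+\tfrac{1}{x_2}\Bigr)+\tfrac{1}{x_1x_2},
\]
and similarly with $y_1,y_2$ in place of $x_1,x_2$. Since the constant terms and the $\pm 1/(x_1x_2) = \pm 1/(y_1y_2)$ terms match, both asserted inequalities reduce to the single statement
\[
\frac{1}{x_1}+\frac{1}{x_2} < \frac{1}{y_1}+\frac{1}{y_2}
= \frac{1}{x_1\alpha}+\frac{\alpha}{x_2}.
\]

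Next I would rearrange this as
\[
\frac{1}{x_1}\Bigl(\frac{1}{\alpha}-1\Bigr) > \frac{1}{x_2}(1-\alpha),
\]
i.e.\ $\frac{1-\alpha}{x_1\alpha} > \frac{1-\alpha}{x_2}$. Since $0<\alpha<1$ the common factor $1-\alpha$ is strictly positive, so the inequality becomes simply $x_2 > x_1\alpha$, which follows immediately from the hypothesis $x_1 \le x_2$ combined with $\alpha<1$ and $x_1>0$.

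There is no real obstacle here: the whole lemma is a two-line algebraic identity followed by the trivial inequality $x_1\alpha < x_1 \le x_2$. The only thing to watch is the strictness of the inequality, which is secured because $\alpha<1$ strictly, forcing $x_1\alpha < x_1 \le x_2$ and hence $(1-\alpha)/(x_1\alpha) > (1-\alpha)/x_2$ as a strict inequality. Both the ``$-$'' and ``$+$'' versions follow at once from this single comparison, so I would present them in parallel rather than treating each separately.
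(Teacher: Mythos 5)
Your proof is correct and follows essentially the same route as the paper: expand both sides, cancel the constant term and the product term $1/(x_1x_2)=1/(y_1y_2)$, and reduce both displayed inequalities to the single comparison $\frac{1}{x_1}+\frac{1}{x_2}<\frac{1}{x_1\alpha}+\frac{1}{x_2\alpha^{-1}}$. The only cosmetic difference is how you finish: you factor out $1-\alpha$ to land on $x_2>x_1\alpha$, whereas the paper rewrites the same inequality as $(\alpha-x_2/x_1)(\alpha-1)>0$ and observes both factors are negative; these are equivalent manipulations.
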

\begin{proof}
By expanding both sides of the inequalities,
we find that it suffices to prove
\[
\frac{1}{x_1}+\frac{1}{x_2}<\frac{1}{x_1\alpha}+\frac{1}{x_2\alpha^{-1}}.
\]
This is equivalent to
\[
\left(\alpha-\frac{x_2}{x_1}\right)\left(\alpha-1\right)>0.
\]
Since $\alpha<1\le x_2/x_1$, the last inequality holds.
This completes the proof.
\end{proof}

%%%%%%%%%%%%%%%%%%%%
%
% Cook Lemma
%
%%%%%%%%%%%%%%%%%%%%
\begin{lemma}
\label{Lem:Cook}
Let
\begin{equation}
\label{EQ:xy_Cook_condition1}
1<x_1\le x_2\le\cdots\le x_k,\quad
1<y_1\le y_2\le\cdots\le y_k
\end{equation}
be sequences of real numbers satisfying
\begin{equation}
\label{EQ:xy_Cook_condition2}
\prod_{i=1}^{s}x_i\le\prod_{i=1}^{s}y_i
\end{equation}
for every $s$ with $1\le s\le k$.
Then we have
\[
\prod_{i=1}^{k}\left(1-\frac{1}{x_i}\right)\le\prod_{i=1}^{k}\left(1-\frac{1}{y_i}\right),\quad
\prod_{i=1}^{k}\left(1+\frac{1}{x_i}\right)\ge\prod_{i=1}^{k}\left(1+\frac{1}{y_i}\right),
\]
where each of two equalities holds if and only if $x_i=y_i$ for every $i\ge1$.
\end{lemma}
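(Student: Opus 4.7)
The plan is to interpolate between $x$ and $y$ and show that the product $\prod(1-1/x_i)$ is monotone along the interpolation. Specifically, set $x_s(t) := x_s^{1-t}\,y_s^{t}$ for $t \in [0,1]$ and $s=1,\ldots,k$, so $x_s(0) = x_s$ and $x_s(1) = y_s$. Since $\log x_s(t) = (1-t)\log x_s + t\log y_s$ is a convex combination of two strictly positive sequences both weakly increasing in $s$, the interpolation stays in the admissible region: $x_s(t) > 1$ and $x_1(t) \le x_2(t) \le \cdots \le x_k(t)$ for every $t \in [0,1]$.

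Next I would differentiate $\log\prod_s(1-1/x_s(t))$ in $t$. Using $x_s'(t) = x_s(t)(\log y_s - \log x_s)$, a one-line computation gives
\[
\frac{d}{dt}\log\prod_{s=1}^{k}\left(1-\frac{1}{x_s(t)}\right) = \sum_{s=1}^{k}\frac{a_s}{x_s(t)-1},
\]
where $a_s := \log y_s - \log x_s$. The hypothesis \eqref{EQ:xy_Cook_condition2} translates to $A_s := \sum_{i\le s} a_i \ge 0$ for every $s$, while monotonicity of $x_s(t)$ in $s$ makes the weights $1/(x_s(t)-1)$ positive and non-increasing in $s$. Abel summation then shows the derivative is $\ge 0$, so the product is non-decreasing in $t$, which gives the first inequality of the lemma. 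The second inequality is handled identically with $1+1/x_s(t)$ in place of $1-1/x_s(t)$: the derivative picks up an overall minus sign, and the weights $1/(x_s(t)+1)$ are still positive and non-increasing in $s$, so the corresponding product is non-increasing in $t$.

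A purely discrete alternative would induct on $k$: in the easy case $x_k \le y_k$, apply the inductive hypothesis to the first $k-1$ terms and multiply by the factors at index $k$; in the case $x_k > y_k$, apply Lemma \ref{Lem:pre_Cook} with $\alpha = y_k/x_k$ to a pair $(y_j, y_k)$ in order to drive $y_k$ down to $x_k$ and so reduce to the easy case. The main obstacle on this discrete route is bookkeeping: one has to choose $j$ so that the modified sequence remains weakly increasing, stays strictly above $1$, and still dominates $x$ in partial products; the existence of such a $j$ must be extracted from the partial product inequality. The interpolation approach avoids this combinatorial choice by moving all coordinates infinitesimally at once.

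The equality case is read off the Abel identity. Since each weight $1/(x_s(t)-1)$ is strictly positive, equality throughout $t\in[0,1]$ forces $A_k = 0$, so $\prod x_i = \prod y_i$; and at any index $s$ where $x_s(t) < x_{s+1}(t)$ strictly, the Abel weight drop $1/(x_s(t)-1) - 1/(x_{s+1}(t)-1) > 0$ forces $A_s = 0$ as well. A short case analysis using the weak monotonicity of both sequences then yields $A_s = 0$ for every $s$, hence $x_s = y_s$ for every $s$, as required.
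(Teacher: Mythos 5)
Your interpolation argument is correct and takes a genuinely different route from the paper. The paper fixes $\mathbf{x}$, shows (via a compactness observation) that the minimum of $\prod_i(1-1/y_i)$ over the feasible region $\mathcal{R}$ is attained, and then shows any non-extremal $\mathbf{y}\in\mathcal{R}$ can be strictly improved by perturbing a carefully chosen pair of coordinates $(y_t,y_v)$ via Lemma~\ref{Lem:pre_Cook}; the ``bookkeeping'' you flag as the obstacle in your discrete alternative (choosing $t$, $v$, and $\alpha$ so that the perturbed tuple remains ordered, stays above $1$, and still dominates $\mathbf{x}$ in partial products) is exactly what occupies most of the paper's proof. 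You instead interpolate geometrically, $x_s(t)=x_s^{1-t}y_s^{t}$, note that this keeps $x_s(t)>1$ and weakly increasing in $s$, and reduce the inequality to the sign of $\sum_s a_s/(x_s(t)\mp 1)$, which Abel summation settles immediately from $A_s=\sum_{i\le s}(\log y_i-\log x_i)\ge 0$ together with the positivity and monotonicity of the weights. This buys a one-line monotonicity proof at the cost of a calculus step; the paper's version stays purely elementary at the cost of the combinatorial perturbation argument. One small point: your equality discussion is gestured at rather than proved. To close it, make the cascade explicit: if $s^*$ is the largest index with $A_{s^*}>0$ then $s^*<k$ (since $A_k=0$), and for every $s\le s^*$ the vanishing Abel term forces $x_s(t)=x_{s+1}(t)$ for all $t$, hence $x_s=x_{s+1}$ and $y_s=y_{s+1}$, hence $a_1=\cdots=a_{s^*+1}$; combined with $A_{s^*+1}=0$ this gives $a_1=-A_{s^*}<0$, contradicting $A_1\ge 0$. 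Only then do you get $A_s=0$ for every $s$ and therefore $x_s=y_s$ for every $s$.
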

\begin{proof}
We first fix the tuple $\mathbf{x}=(x_i)$.
Let us identify the tuple $\mathbf{y}=(y_i)$ with a point in the Euclidean space $\mathbb{R}^{k}$
and let
\[
\mathcal{R}=\Set{\mathbf{y}\in\mathbb{R}^k| y_1\le\cdots\le y_k\ \text{and}\ 
\prod_{i=1}^{s}x_i\le\prod_{i=1}^{s}y_i\ \text{for all $1\le s\le k$}},
\]
\[
G(\mathbf{y})=\prod_{i=1}^{k}\left(1-\frac{1}{y_i}\right),\quad
H(\mathbf{y})=\prod_{i=1}^{k}\left(1+\frac{1}{y_i}\right)
\]
Note that the condition $y_1>1$ in \eqref{EQ:xy_Cook_condition1} is assured by
the case $s=1$ of \eqref{EQ:xy_Cook_condition2} since $y_1\ge x_1>1$.
Thus what we have to prove is that
the minimum value of $G(\mathbf{y})$ and the maximum value of $H(\mathbf{y})$
for $\mathbf{y}\in\mathcal{R}$ is taken only at $\mathbf{y}=\mathbf{x}$.

Note that $G(\mathbf{y})$ is increasing in every variable $y_i$ and $H(\mathbf{y})$ is decreasing in every variable $y_i$.
and that if $\mathbf{y}\in\mathcal{R}$, then
\[
\left(\min\left(y_1,\prod_{i=1}^{k}x_i\right),\ldots,\min\left(y_k,\prod_{i=1}^{k}x_i\right)\right)\in\mathcal{R}.
\]
Thus, the minimum value of $G(\mathbf{y})$ and the maximum value of $H(\mathbf{y})$ for $\mathbf{y}\in\mathcal{R}$
exists and is taken in the closed set
$\mathcal{R}\cap[1,\prod_{i=1}^{k}x_i]^k$.

Take $\mathbf{y}\in\mathbf{R}$ with $\mathbf{y}\neq\mathbf{x}$ arbitrarily.
Since we proved the existence of the minimum and maximum values of $G(\mathbf{y})$ and $H(\mathbf{y})$,
it suffices to prove that we can modify $\mathbf{y}$ to $\tilde{\mathbf{y}}\in\mathcal{R}$
such that $G(\mathbf{y})>G(\tilde{\mathbf{y}})$ and $H(\mathbf{y})<H(\tilde{\mathbf{y}})$.
Take the smallest index $t$ with $x_t\neq y_t$ and $1\le t\le k$. Then we have
\begin{equation}
\label{EQ:xy_equal}
x_i=y_i\ \ \text{for all $1\le i<t$}
\end{equation}
so that
\begin{equation}
\label{EQ:xy_product_equal}
\prod_{i=1}^{t-1}x_i=\prod_{i=1}^{t-1}y_i.
\end{equation}
By \eqref{EQ:xy_Cook_condition2}, \eqref{EQ:xy_product_equal} and $x_t\neq y_t$, we have
\begin{equation}
\label{EQ:xy_phase_transition}
x_t<y_t,\quad\text{so}\quad\prod_{i=1}^{t}x_i<\prod_{i=1}^{t}y_i.
\end{equation}
If $t$ is the last index, i.e.~$t=k$,
then by \eqref{EQ:xy_equal} and \eqref{EQ:xy_phase_transition} we have
\begin{align*}
G(\mathbf{y})
=\prod_{i=1}^{k-1}\left(1-\frac{1}{x_i}\right)\left(1-\frac{1}{y_k}\right)
>\prod_{i=1}^{k-1}\left(1-\frac{1}{x_i}\right)\left(1-\frac{1}{x_k}\right)
=G(\mathbf{x}),
\end{align*}
\begin{align*}
H(\mathbf{y})
=\prod_{i=1}^{k-1}\left(1+\frac{1}{x_i}\right)\left(1+\frac{1}{y_k}\right)
<\prod_{i=1}^{k-1}\left(1+\frac{1}{x_i}\right)\left(1+\frac{1}{x_k}\right)
=H(\mathbf{x}).
\end{align*}
Thus $\tilde{\mathbf{y}}=\mathbf{x}$ satisfies the conditions.
Hence we may assume $1\le t\le k-1$.

We next take the smallest index $v$ with $y_v=y_{t+1}$ and $t<v\le k$.
Then we have
\begin{equation}
\label{EQ:v_cond}
y_{t+1}=y_{t+2}=\cdots=y_{v}<y_{v+1},
\end{equation}
where we use a convention $y_{k+1}=2y_k$.
Now we prove
\begin{equation}
\label{EQ:critical_range_ineq}
\prod_{i=1}^{s}x_i<\prod_{i=1}^{s}y_i\ \ \text{for all $t\le s<v$}.
\end{equation}
Assume to the contrary that there is $s$ with $t\le s<v$ and
\[
\prod_{i=1}^{s}x_i\ge\prod_{i=1}^{s}y_i.
\]
By \eqref{EQ:xy_Cook_condition2}, we find that
\begin{equation}
\label{EQ:s_product_equal}
\prod_{i=1}^{s}x_i=\prod_{i=1}^{s}y_i.
\end{equation}
Thus, by the second inequality in \eqref{EQ:xy_phase_transition},
\[
\prod_{i=t+1}^{s}x_i
=
\left(\prod_{i=1}^{t}x_i\right)^{-1}\prod_{i=1}^{s}x_i
>
\left(\prod_{i=1}^{t}y_i\right)^{-1}\prod_{i=1}^{s}y_i
=
\prod_{i=t+1}^{s}y_i.
\]
Combined with \eqref{EQ:xy_Cook_condition1} and \eqref{EQ:v_cond}, this gives
\[
x_{s+1}\ge x_s
\ge\left(\prod_{i=t+1}^{s}x_i\right)^{1/(s-t)}
>\left(\prod_{i=t+1}^{s}y_i\right)^{1/(s-t)}=y_{s}=y_{s+1}
\]
since $s+1\le v$. By multiplying both sides by \eqref{EQ:s_product_equal}, we obtain
\[
\prod_{i=1}^{s+1}x_i>\prod_{i=1}^{s+1}y_i,
\]
which contradicts to the assumption \eqref{EQ:xy_Cook_condition2}.
Thus we obtain \eqref{EQ:critical_range_ineq}.

By $x_t<y_t$ , $y_v<y_{v+1}$ and \eqref{EQ:critical_range_ineq},
we find that
\[
0\le\max\left(x_t/y_t,\ y_{v}/y_{v+1},\ \prod_{i=1}^{t}x_i/y_i,\ \ldots,\ \prod_{i=1}^{v-1}x_i/y_i\right)<1
\]
so we can take a positive real number $\alpha$ with
\begin{equation}
\label{EQ:alpha_choice}
\max\left(x_t/y_t,\ y_{v}/y_{v+1},\ \prod_{i=1}^{t}x_i/y_i,\ \ldots,\ \prod_{i=1}^{v-1}x_i/y_i\right)<\alpha<1.
\end{equation}
We now define $\tilde{\mathbf{y}}=(\tilde{y}_i)\in\mathbb{R}^k$ by
\begin{equation}
\label{EQ:tilde_choice}
\tilde{y}_i=
\left\{
\begin{array}{ll}
y_i&(\text{for $i\neq t,v$}),\\
y_i\alpha&(\text{for $i=t$}),\\
y_i\alpha^{-1}&(\text{for $i=v$}),\\
\end{array}
\right.
\end{equation}
and check that this $\tilde{\mathbf{y}}$ satisfies the desired conditions.

First, we check
\begin{equation}
\label{EQ:tilde_Cook_condition1}
\tilde{y}_1\le\cdots\le\tilde{y}_k,\quad\text{i.e.}\quad
\tilde{y}_i\le\tilde{y}_{i+1}\ \ \text{for all $1\le i<k$}.
\end{equation}
This is obvious for $i\not\in\{t-1,t,v-1,v\}$ since in this case, $\tilde{y}_i$ and $\tilde{y}_{i+1}$
coincide with $y_i$ and $y_{i+1}$ respectively.
For the case $i\in\{t-1,t,v-1,v\}$ and $1\le i<k$, we use \eqref{EQ:alpha_choice} to check
\[
\tilde{y}_{t-1}=y_{t-1}=x_{t-1}\le x_t=y_t\cdot(x_t/y_t)<y_t\alpha=\tilde{y}_t,
\]
\[
\tilde{y}_{t}=y_t\alpha<y_t\le y_{t+1}\le\tilde{y}_{t+1},\quad
\tilde{y}_{v-1}\le y_{v-1}\le y_v<y_v\alpha^{-1}=\tilde{y}_v,
\]
\[
\tilde{y}_v=y_v\alpha^{-1}<y_v\cdot(y_v/y_{v+1})^{-1}=y_{v+1}=\tilde{y}_{v+1}.
\]
Thus the condition \eqref{EQ:tilde_Cook_condition1} holds.

Second, we check
\begin{equation}
\label{EQ:tilde_Cook_condition2}
\prod_{i=1}^{s}x_i\le\prod_{i=1}^{s}\tilde{y}_i\ \ \text{for all $1\le s\le k$}.
\end{equation}
This is obvious for $1\le s<t$ and $v\le s\le k$ since in these cases, we have
\[
\prod_{i=1}^{s}\tilde{y}_i=\prod_{i=1}^{s}y_i
\]
by our choice \eqref{EQ:tilde_choice}.
For $t\le s<v$, we see that
\[
\prod_{i=1}^{s}x_i
=\left(\prod_{i=1}^{s}x_i/y_i\right)\left(\prod_{i=1}^{s}y_i\right)
<\alpha\prod_{i=1}^{s}y_i
=\prod_{i=1}^{s}\tilde{y}_i
\]
by \eqref{EQ:alpha_choice}.
Thus the condition \eqref{EQ:tilde_Cook_condition2} also holds, i.e. $\tilde{\mathbf{y}}\in\mathcal{R}$.

Finally, we check that $G(\mathbf{y})>G(\tilde{\mathbf{y}})$ and $H(\mathbf{y})<H(\tilde{\mathbf{y}})$.
Since $0<\alpha<1$ and $y_t\le y_{t+1}\le y_v$,
by recalling \eqref{EQ:tilde_choice},
we can apply Lemma \ref{Lem:pre_Cook} to obtain
\begin{align*}
G(\mathbf{y})
&=
\prod_{\substack{i=1\\i\neq t,v}}^{k}\left(1-\frac{1}{\tilde{y}_i}\right)
\left(1-\frac{1}{y_t}\right)\left(1-\frac{1}{y_v}\right)\\
&>\prod_{\substack{i=1\\i\neq t,v}}^{k}\left(1-\frac{1}{\tilde{y}_i}\right)
\left(1-\frac{1}{y_t\alpha}\right)\left(1-\frac{1}{y_v\alpha^{-1}}\right)
=
G(\tilde{\mathbf{y}}),
\end{align*}
\begin{align*}
H(\mathbf{y})
&=
\prod_{\substack{i=1\\i\neq t,v}}^{k}\left(1+\frac{1}{\tilde{y}_i}\right)
\left(1+\frac{1}{y_t}\right)\left(1+\frac{1}{y_v}\right)\\
&<\prod_{\substack{i=1\\i\neq t,v}}^{k}\left(1+\frac{1}{\tilde{y}_i}\right)
\left(1+\frac{1}{y_t\alpha}\right)\left(1+\frac{1}{y_v\alpha^{-1}}\right)
=
H(\tilde{\mathbf{y}}).
\end{align*}
Thus our $\tilde{\mathbf{y}}$ satisfies the desired conditions.
This completes the proof.
\end{proof}

%%%%%%%%%%%%%%%%%%%%
%
% HB-INEQ I
%
%%%%%%%%%%%%%%%%%%%%
We next prove the first lemma on Dipophantine inequality.
The Diophantine inequality in the next lemma
seems to be a natural generalization of the Diophantine inequality (2) of \cite{HB_OPN}
to the linear form with several summands.
\begin{lemma}
\label{Lem:HB_ineq1}
Let $k\ge1$ be an integer.
For $R\ge1$, consider a sequence of integers
\[
\mathcal{M}=(m_j)_{j=1}^{R},\quad1<m_1\le m_2\le\cdots\le m_R,
\]
a decomposition of the index set
\[
J=\{1,\,\ldots,\,R\},\quad
J=\bigcup_{i=1}^{k}J_i,\quad
J_1,\,\ldots,\,J_k\colon\text{disjoint}
\]
and a tuple of integers
\[a_1,\,\ldots,\,a_k,\,b_1,\,\ldots,\,b_k\ge1\]
satisfying $a_i\ge b_i$ for all $i$.
If a pair of inequalities
\begin{equation}
\label{EQ:HB_assump_A1}
\sum_{i=1}^{k}\frac{b_i}{a_i}\prod_{\substack{j=1\\j\in J_i}}^{R}\left(1-\frac{1}{m_j}\right)\le1
\end{equation}
\begin{equation}
\label{EQ:HB_assump_B1}
\sum_{i=1}^{k}\frac{b_i}{a_i}\prod_{\substack{j=1\\j\in J_i\\}}^{R-1}\left(1-\frac{1}{m_j}\right)>1
\end{equation}
holds, then we have
\begin{equation}
\label{EQ:M_bound1}
a\prod_{j=1}^{R}m_j\le F_{R}(a+1),
\end{equation}
where $a=a_1\cdots a_k$.
\end{lemma}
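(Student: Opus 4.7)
The plan is to induct on $R$, following the strategy of Nielsen~\cite{Nielsen_New} adapted to the multi-summand structure. For the base case $R = 1$, let $i_0$ denote the unique index with $1 \in J_{i_0}$ and set $T = 1 - \sum_{i \ne i_0}(b_i/a_i)$. The two hypotheses become $(b_{i_0}/a_{i_0})(1 - 1/m_1) \le T$ and $b_{i_0}/a_{i_0} > T$; since $aT$ is a positive integer (because $a = \prod_i a_i$ and \eqref{EQ:HB_assump_A1} forces $T > 0$), we have $T \ge 1/a$, and a short calculation forces $m_1 \le a+1$, giving $am_1 \le a(a+1) = F_1(a+1)$.

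For the inductive step, let $i_0$ be the index with $R \in J_{i_0}$, and write $M = \prod_{j=1}^R m_j$, $M' = M/m_R$, together with $S$ and $S'$ for the left-hand sides of \eqref{EQ:HB_assump_A1} and \eqref{EQ:HB_assump_B1}. Since both $aMS$ and $aM'S'$ are integers, the hypotheses give $S \le 1$ and $aM'(S' - 1) \ge 1$. Using the identity
\[
S' - S = \frac{b_{i_0}}{a_{i_0} m_R}\prod_{\substack{j\in J_{i_0}\\ j<R}}\left(1-\frac{1}{m_j}\right),
\]
these combine to yield the key estimate
\[
m_R \le a\cdot\frac{b_{i_0}}{a_{i_0}}\prod_{\substack{j\in J_{i_0}\\ j<R}}(m_j-1)\prod_{\substack{j\notin J_{i_0}\\ j<R}}m_j.
\]

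To exploit the induction hypothesis, I pass to a reduced $(R-1)$-element system by absorbing the factor $m_R$ into the coefficients: replace $(a_{i_0},b_{i_0})$ by $(a_{i_0} m_R,\,b_{i_0}(m_R-1))$, and remove $R$ from $J_{i_0}$. The modified system satisfies $b_i^{\sharp}\le a_i^{\sharp}$, has effective coefficient product $a^{\sharp}=am_R$, and its analogue of \eqref{EQ:HB_assump_A1} coincides with the original \eqref{EQ:HB_assump_A1}. Applying the inductive hypothesis then yields $am_R\prod_{j<R}m_j \le F_{R-1}(am_R+1)$, and the identity $F_R(x)=F_{R-1}(x)\cdot x^{2^{R-2}}(x^{2^{R-2}}+1)$ (valid for $R\ge 2$), combined with Lemma~\ref{Lem:F_scaling} applied with $\alpha=(am_R+1)/(a+1)$, converts this into the desired bound $aM\le F_R(a+1)$.

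The main obstacle is verifying that the reduced system also satisfies the analogue of \eqref{EQ:HB_assump_B1}. The new \eqref{EQ:HB_assump_A1} is immediate, but the new \eqref{EQ:HB_assump_B1} (obtained by further removing $m_{R-1}$ in the reduced system) does not follow directly from the original pair of hypotheses. I expect to handle this by a case analysis: if the new \eqref{EQ:HB_assump_B1} holds, the induction applies directly; otherwise, the original \eqref{EQ:HB_assump_A1} is preserved under further factor removals, and I iterate the reduction until reaching a level where both reduced hypotheses hold simultaneously.
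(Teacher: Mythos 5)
Your base case is essentially correct (though the quantity whose positivity-plus-integrality you should invoke is $a(b_{i_0}/a_{i_0}-T)$, not $aT$; both are positive integers, but only the former gives $m_1\le a\,b_{i_0}/a_{i_0}\le a$). The inductive step, however, has a fatal gap that is separate from the one you flag, and it is rooted in the direction in which you peel off factors.

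You absorb the \emph{largest} term $m_R$ into $a_{i_0}$, obtain a reduced $(R-1)$-element system with coefficient product $a^{\sharp}=am_R$, and conclude $aM\le F_{R-1}(am_R+1)$. You then want $F_{R-1}(am_R+1)\le F_R(a+1)$. This inequality is simply false in general: $F_R(a+1)=(a+1)^{2^R}-(a+1)^{2^{R-1}}$, whereas $F_{R-1}(am_R+1)\approx(am_R)^{2^{R-1}}$, so the conversion requires roughly $am_R+1\le(a+1)^2$, i.e.~$m_R\lesssim a$. But the target bound allows $m_R$ to be as large as about $(a+1)^{2^{R-1}}$, and in the extremal configuration (when the $m_j$ match the comparison sequence) $m_R$ really is that large. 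Lemma~\ref{Lem:F_scaling} cannot rescue this, since it only gives a \emph{lower} bound for $F_{R-1}(\alpha x)$ in terms of $F_{R-1}(x)$, which is the wrong direction. So even if you resolve the issue you noticed about the reduced version of \eqref{EQ:HB_assump_B1}, the bookkeeping for the recursion breaks.

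The paper's proof peels from the \emph{front}, not the back. It introduces the comparison sequence $x_j=(a+1)^{2^{j-1}}+1$ (and $x_R=(a+1)^{2^{R-1}}$) and splits into two cases: if $\prod_{j\le r}m_j<\prod_{j\le r}x_j$ for some $r<R$, then the peeled product of \emph{small} factors is controlled, $a\prod_{j\le r}m_j<(a+1)^{2^r}$, so the new coefficient product $a'$ satisfies $a'+1\le(a+1)^{2^r}$ and $F_{R-r}(a'+1)\le F_R(a+1)$; otherwise $\prod_{j\le r}m_j\ge\prod_{j\le r}x_j$ for all $r$, and Lemma~\ref{Lem:Cook} (the refined Cook inequality, which you never invoke) together with the rigidity $\sum b_i/a_i\ge(a+1)/a$ forces $m_j=x_j$ for every $j$, whence the bound holds with equality. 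The crucial asymmetry is that the algorithm only ever absorbs factors whose cumulative product it can bound in terms of $a$; absorbing $m_R$ forfeits exactly that control. You would need to rebuild your induction to peel from the small end, with the $x_j$ as a yardstick and Lemma~\ref{Lem:Cook} handling the extremal case.
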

%%%%%%%%%%%%%%%%%%%%
\begin{proof}
We first give a preliminary remark.
By \eqref{EQ:HB_assump_B1},
we always have
\begin{equation}
\label{EQ:abcd_ineq_pre1}
\sum_{i=1}^{k}\frac{b_i}{a_i}>1
\end{equation}
since each of $\Pi(1-1/m)$ is $\le1$ even for the case they are empty products.
Since the left-hand side of \eqref{EQ:abcd_ineq_pre1} is a rational fraction
with denominator $a$,
\begin{equation}
\label{EQ:abcd_ineq1}
\sum_{i=1}^{k}\frac{b_i}{a_i}\ge\frac{a+1}{a}.
\end{equation}
We use this inequality several times below.

We use induction on $R$.
If $R=1$, by symmetry we may assume without loss of generality that
$J_1=\cdots=J_{k-1}=\emptyset$ and $J_{k}=\{1\}$.
Then \eqref{EQ:HB_assump_A1} implies
\[
\sum_{i=1}^{k}\frac{b_i}{a_i}-\frac{b_k}{a_k}\cdot\frac{1}{m_1}\le1.
\]
By \eqref{EQ:abcd_ineq1}, we find that
\[
\frac{b_k}{a_k}\cdot\frac{1}{m_1}\ge\sum_{i=1}^{k}\frac{b_i}{a_i}-1\ge\frac{1}{a}
\]
so that
\[
am_1\le a^2\le a(a+1)=F_1(a+1)
\]
since $a_k\ge b_k$. This completes the proof of the case $R=1$.

We next assume that the assertion holds for any sequence $\mathcal{M}$ of length $\le R-1$
and prove the assertion for the case in which $\mathcal{M}$ has the length $R$.
We use a special sequence
\[
1<x_{1}<\cdots<x_{R},
\]
which is defined by
\[
x_{j}=
\left\{
\begin{array}{ll}
(a+1)^{2^{j-1}}+1&(\text{for $1\le j< R$})\\
(a+1)^{2^{R-1}}&(\text{for $j=R$}).
\end{array}
\right.
\]
We first consider the case
\[
\prod_{j=1}^{r}x_{j}>\prod_{j=1}^{r}m_{j},
\]
for some $1\le r<R$.
Then we have
\begin{equation}
\label{EQ:acm_small_m1}
am_{1}\cdots m_{r}
<ax_{1}\cdots x_{r}
=
(a+1)^{2^{r}}-1.
\end{equation}
By using notations
\[
a'_i=a_i\prod_{\substack{j=1\\j\in J_i}}^{r}m_{j},\quad
b'_i=b_i\prod_{\substack{j=1\\j\in J_i}}^{r}(m_{j}-1),\quad
a'=a'_1\cdots a'_k,
\]
we can rewrite \eqref{EQ:HB_assump_A1} and \eqref{EQ:HB_assump_B1} as
\[
\sum_{i=1}^{k}\frac{b'_i}{a'_i}\prod_{\substack{j=r+1\\j\in J_i}}^{R}\left(1-\frac{1}{m_j}\right)\le1,\quad
\sum_{i=1}^{k}\frac{b'_i}{a'_i}\prod_{\substack{j=r+1\\j\in J_i}}^{R-1}\left(1-\frac{1}{m_j}\right)>1.
\]
Note that the condition $r<R$ is necessary
for rewriting the inequality \eqref{EQ:HB_assump_B1} as above.
By the induction hypothesis, we obtain
\[
a\prod_{j=1}^{R}m_j
=
a'\prod_{j=r+1}^{R}m_j
\le
F_{R-r}(a'+1)
=
F_{R-r}(am_{1}\cdots m_{r}+1)
\]
By \eqref{EQ:acm_small_m1} and Lemma \ref{Lem:F_increasing}, this implies
\[
a\prod_{j=1}^{R}m_j
\le
F_{R-r}\left((a+1)^{2^{r}}\right)
=
F_{R}(a+1)
\]
so the assertion follows.
Thus we may assume
\begin{equation}
\label{EQ:partial_product_small1}
\prod_{j=1}^{r}x_j\le\prod_{j=1}^{r}m_j
\end{equation}
for all $1\le r<R$.
We may also assume \eqref{EQ:partial_product_small1}
for the case $r=R$ since otherwise
\begin{equation}
\label{EQ:xy_calculation1}
a\prod_{j=1}^{R}m_j\le a\prod_{j=1}^{R}x_j=F_R(a+1)
\end{equation}
and the assertion follows.
Thus, for the remaining case,
we have \eqref{EQ:partial_product_small1} for every $1\le r\le R$.
Then we can apply Lemma \ref{Lem:Cook} to obtain
\begin{equation}
\label{EQ:Cook_M1}
\prod_{j=1}^{R}\left(1-\frac{1}{m_{j}}\right)
\ge\prod_{j=1}^{R}\left(1-\frac{1}{x_{j}}\right)
=\frac{a}{a+1}
\end{equation}
Then by \eqref{EQ:HB_assump_A1} and \eqref{EQ:abcd_ineq1}, we have
\[
1
\ge
\sum_{i=1}^{k}\frac{b_i}{a_i}\prod_{\substack{j=1\\j\in J_i}}^{R}\left(1-\frac{1}{m_{j}}\right)
\ge
\prod_{j=1}^{R}\left(1-\frac{1}{m_{j}}\right)\sum_{i=1}^{k}\frac{b_i}{a_i}
\ge
\frac{a}{a+1}\sum_{i=1}^{k}\frac{b_i}{a_i}\ge1.
\]
Thus we must have the equality in \eqref{EQ:Cook_M1}.
By Lemma \ref{Lem:Cook}, we find that
\[
m_{1}=x_{1},\ \ \ldots,\ \ m_{R}=x_{R}.
\]
By using \eqref{EQ:xy_calculation1}, we have the assertion again.
This completes the proof.
\end{proof}

%%%%%%%%%%%%%%%%%%%%
%
% HB-INEQ II
%
%%%%%%%%%%%%%%%%%%%%
We next prove the second lemma on Diophantine inequality.
\begin{lemma}
\label{Lem:HB_ineq2}
Let $k\ge1$ be an integer.
For $R\ge1$, consider a sequence of integers
\[
\mathcal{M}=(m_j)_{j=1}^{R},\quad1<m_1\le m_2\le\cdots\le m_R,
\]
a decomposition of the index set
\[
J=\{1,\,\ldots,\,R\},\quad
J=\bigcup_{i=1}^{k}J_i,\quad
J_1,\,\ldots,\,J_k\colon\text{disjoint}
\]
and a tuple of integers
\[a_1,\,\ldots,\,a_k,\,b_1,\,\ldots,\,b_k\ge1.\]
If a pair of inequalities
\begin{equation}
\label{EQ:HB_assump_A2}
\sum_{i=1}^{k}\frac{b_i}{a_i}\prod_{\substack{j=1\\j\in J_i}}^{R}\left(1-\frac{1}{m_j}\right)^{-1}\ge1
\end{equation}
\begin{equation}
\label{EQ:HB_assump_B2}
\sum_{i=1}^{k}\frac{b_i}{a_i}\prod_{\substack{j=1\\j\in J_i}}^{R-1}\left(1-\frac{1}{m_j}\right)^{-1}<1
\end{equation}
holds, then we have
\begin{equation}
\label{EQ:M_bound1}
a\prod_{j=1}^{R}(m_j-1)\le F_{R}(a),
\end{equation}
where $a=a_1\cdots a_k$.
\end{lemma}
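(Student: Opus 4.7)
The plan is to mirror the proof of Lemma \ref{Lem:HB_ineq1}, with every factor $(1-1/m_j)$ replaced by its reciprocal $(1-1/m_j)^{-1}=1+1/(m_j-1)$. Because the target is now an upper bound on $a\prod(m_j-1)$ rather than on $a\prod m_j$, both the reference sequence and the Cook-style comparison must shift from the $m_j$'s to the $(m_j-1)$'s, and the second inequality of Lemma \ref{Lem:Cook} (on $1+1/\cdot$) takes the role played by the first inequality in Lemma \ref{Lem:HB_ineq1}. Since each $(1-1/m_j)^{-1}\ge 1$, assumption \eqref{EQ:HB_assump_B2} forces $\sum_i b_i/a_i<1$, which sharpens to $\sum_i b_i/a_i\le(a-1)/a$ as the left-hand side is a rational with denominator dividing $a$. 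In the base case $R=1$, relabelling so that $J_k=\{1\}$, I would subtract \eqref{EQ:HB_assump_B2} from \eqref{EQ:HB_assump_A2} to get $(b_k/a_k)/(m_1-1)\ge 1/a$, and combine with $b_k/a_k\le(a-1)/a$ to conclude $m_1-1\le a-1$, whence $a(m_1-1)\le F_1(a)$.

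For the induction step on $R$, I would introduce the reference sequence
\[
x_j=a^{2^{j-1}}+1\quad(1\le j<R),\qquad x_R=a^{2^{R-1}},
\]
which one checks telescopes to $\prod_{j=1}^R(1-1/x_j)=(a-1)/a$, $a\prod_{j=1}^R(x_j-1)=F_R(a)$, and $\prod_{j=1}^r(x_j-1)=a^{2^r-1}$ for $r<R$. The proof would then split according to whether $\prod_{j=1}^r(m_j-1)<\prod_{j=1}^r(x_j-1)$ holds for some $1\le r\le R$. When $r=R$ the bound is immediate from $a\prod(m_j-1)<a\prod(x_j-1)=F_R(a)$; when $r<R$, the substitution
\[
a_i'=a_i\prod_{\substack{j\in J_i\\j\le r}}(m_j-1),\qquad b_i'=b_i\prod_{\substack{j\in J_i\\j\le r}}m_j
\]
rewrites \eqref{EQ:HB_assump_A2} and \eqref{EQ:HB_assump_B2} for the tail $(m_{r+1},\ldots,m_R)$ with the new coefficients $a_i',b_i'$, so that the induction hypothesis combined with $a'=a\prod_{j\le r}(m_j-1)\le a^{2^r}$ and Lemma \ref{Lem:F_increasing} yields $a\prod_{j=1}^R(m_j-1)\le F_{R-r}(a')\le F_{R-r}(a^{2^r})=F_R(a)$.

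In the remaining case $\prod_{j=1}^s(x_j-1)\le\prod_{j=1}^s(m_j-1)$ for every $s$, I would apply the second inequality of Lemma \ref{Lem:Cook} to the sequence $y_j=m_j-1$ against the reference $x_j-1$; the $s=1$ condition $m_1-1\ge a\ge 2$ guarantees $y_1>1$ as required. This gives $\prod_{j=1}^R(1-1/m_j)^{-1}\le a/(a-1)$, and combining it with the trivial bound $\prod_{j\in J_i}(1-1/m_j)^{-1}\le\prod_{j=1}^R(1-1/m_j)^{-1}$ and $\sum_i b_i/a_i\le(a-1)/a$ shows that the left-hand side of \eqref{EQ:HB_assump_A2} is at most $1$. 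Equality must therefore hold throughout, forcing $m_j=x_j$ by the equality clause of Lemma \ref{Lem:Cook}, so that $a\prod(m_j-1)=F_R(a)$. The main subtlety is recognising that the recursion on $a'=a\prod(m_j-1)$ dictates comparing the $(m_j-1)$'s with the $(x_j-1)$'s, so that the appropriate Cook-type tool is the second inequality of Lemma \ref{Lem:Cook} rather than the first used in Lemma \ref{Lem:HB_ineq1}.
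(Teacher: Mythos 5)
Your proof is correct and follows the paper's own argument essentially step for step: the same rationality refinement $\sum_i b_i/a_i\le(a-1)/a$ from \eqref{EQ:HB_assump_B2}, the same base case at $R=1$, the same reference sequence $x_j$, the same peeling-off substitution $a_i'=a_i\prod_{j\le r,\,j\in J_i}(m_j-1)$, $b_i'=b_i\prod_{j\le r,\,j\in J_i}m_j$ in the induction step, and the same appeal to the second inequality of Lemma~\ref{Lem:Cook} applied to the shifted sequences $m_j-1$ versus $x_j-1$ in the remaining case. The only cosmetic deviation is phrasing the base case as a ``subtraction'' of the two hypotheses, but since you had already noted the sharpening to $\sum_i b_i/a_i\le(a-1)/a$ this reduces to the paper's computation.
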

%%%%%%%%%%%%%%%%%%%%
\begin{remark}
\label{Rem:abcd_cond}
We assumed $a_i\ge b_i$ in Lemma \ref{Lem:HB_ineq1},
but we do not assume this condition in Lemma \ref{Lem:HB_ineq2} above.
Actually, by \eqref{EQ:HB_assump_B2}, we automatically obtain $a_i>b_i$.
\end{remark}
%%%%%%%%%%%%%%%%%%%%
\begin{proof}
By \eqref{EQ:HB_assump_B2}, we always have
\begin{equation}
\label{EQ:abcd_ineq2}
\frac{1}{a}\le\sum_{i=1}^{k}\frac{b_i}{a_i}\le1-\frac{1}{a}
\end{equation}
as in the proof of \eqref{EQ:abcd_ineq1}.
Again this is a key in the argument below.

We use induction on $R$.
If $R=1$, by symmetry we may assume without loss of generality that
$J_1=\cdots=J_{k-1}=\emptyset$ and $J_{k}=\{1\}$.
Then \eqref{EQ:HB_assump_A2} implies
\[
1
\le
\sum_{i=1}^{k-1}\frac{b_i}{a_i}+\frac{b_k}{a_k}\left(1-\frac{1}{m_1}\right)^{-1}
=
\sum_{i=1}^{k}\frac{b_i}{a_i}+\frac{b_k}{a_k}\cdot\frac{1}{m_1-1}.
\]
By \eqref{EQ:abcd_ineq2}, we find that
\[
\frac{b_k}{a_k}\cdot\frac{1}{m_1-1}\ge1-\sum_{i=1}^{k}\frac{b_i}{a_i}\ge\frac{1}{a}
\]
so that
\[
a(m_1-1)\le a(a-1)=F_1(a)
\]
since $a_k>b_k$.
This completes the proof of the case $R=1$.

We next assume that the assertion holds for any sequence $\mathcal{M}$ of length $\le R-1$
and prove the assertion for the case in which $\mathcal{M}$ has the length $R$.
We use a special sequence
\[
1<x_{1}<\cdots<x_{R},
\]
which is defined by
\[
x_{j}=
\left\{
\begin{array}{ll}
a^{2^{j-1}}+1&(\text{for $1\le j< R$})\\
a^{2^{R-1}}&(\text{for $j=R$}).
\end{array}
\right.
\]
We first consider the case
\[
\prod_{j=1}^{r}(x_{j}-1)>\prod_{j=1}^{r}(m_{j}-1),
\]
for some $1\le r<R$. Then we have
\begin{equation}
\label{EQ:acm_small_m2}
a(m_{1}-1)\cdots (m_{r}-1)
<a(x_{1}-1)\cdots (x_{r}-1)
=
a^{2^{r}}.
\end{equation}
By using notations
\[
a''_i=a_i\prod_{\substack{j=1\\j\in J_i}}^{r}(m_{j}-1),\quad
b''_i=b_i\prod_{\substack{j=1\\j\in J_i}}^{r}m_{j},\quad
a''=a''_1\cdots a''_k,
\]
we can rewrite \eqref{EQ:HB_assump_A2} and \eqref{EQ:HB_assump_B2} as
\[
\sum_{i=1}^{k}\frac{b''_i}{a''_i}
\prod_{\substack{j=r+1\\j\in J_i}}^{R}\left(1-\frac{1}{m_j}\right)^{-1}\ge1,\quad
\sum_{i=1}^{k}\frac{b''_i}{a''_i}
\prod_{\substack{j=r+1\\j\in J_i}}^{R-1}\left(1-\frac{1}{m_j}\right)^{-1}<1.
\]
By the induction hypothesis and \eqref{EQ:acm_small_m2}, we obtain
\begin{align*}
a\prod_{j=1}^{R}(m_j-1)
&=
a''\prod_{j=r+1}^{R}(m_j-1)
\le
F_{R-r}(a'')
\le
F_{R}(a)
\end{align*}
so the assertion follows.
Thus we may assume
\begin{equation}
\label{EQ:partial_product_small2}
\prod_{j=1}^{r}(x_j-1)\le\prod_{j=1}^{r}(m_j-1)
\end{equation}
for all $1\le r<R$. We may also assume \eqref{EQ:partial_product_small2}
for the case $r=R$ since otherwise
\begin{equation}
\label{EQ:xy_calculation2}
a\prod_{j=1}^{R}(m_j-1)\le a\prod_{j=1}^{R}(x_j-1)=F_R(a)
\end{equation}
and the assertion follows.
Thus, for the remaining case,
we have \eqref{EQ:partial_product_small2} for every $1\le r\le R$.
Note that by \eqref{EQ:abcd_ineq2},
\[
1<a=(x_1-1).
\]
Thus we can apply Lemma \ref{Lem:Cook} to obtain
\begin{equation}
\label{EQ:Cook_M2}
\prod_{j=1}^{R}\left(1-\frac{1}{m_{j}}\right)^{-1}
=
\prod_{j=1}^{R}\left(1+\frac{1}{m_{j}-1}\right)
\le
\prod_{j=1}^{R}\left(1+\frac{1}{x_{j}-1}\right)
=\frac{a}{a-1}.
\end{equation}
Then by \eqref{EQ:HB_assump_A2} and \eqref{EQ:abcd_ineq2}, we have
\[
1
\le
\sum_{i=1}^{k}\frac{b_i}{a_i}\prod_{\substack{j=1\\j\in J_i}}^{R}\left(1-\frac{1}{m_{j}}\right)^{-1}
\le
\prod_{j=1}^{R}\left(1-\frac{1}{m_{j}}\right)^{-1}\sum_{i=1}^{k}\frac{b_i}{a_i}
\le
\frac{a}{a-1}\sum_{i=1}^{k}\frac{b_i}{a_i}
\le1.
\]
Thus we must have the equality in \eqref{EQ:Cook_M2}.
By Lemma \ref{Lem:Cook}, we find that
\[
m_{1}=x_{1},\ \ \ldots,\ \ m_{R}=x_{R}.
\]
By using \eqref{EQ:xy_calculation2}, we have the assertion again.
This completes the proof.
\end{proof}

%%%%%%%%%%%%%%%%%%%%%%%%%%%%%%%%%%%%%%%%
%
% Section: Upper bounds \`a la Borho
%
%%%%%%%%%%%%%%%%%%%%%%%%%%%%%%%%%%%%%%%%
\section{Upper bounds \`a la Borho}
\label{Section:Omega}
In this section,
we prove Theorem 4, which gives upper bounds of Borho-type.
\begin{proof}[Proof of Theorem \ref{Thm:Omega}]
We first consider a harmonious tuple $(M_{i})_{i=1}^{k}$.
Note that
\[
\frac{M}{\sigma(M)}
=
\prod_{p^e\parallel M}\frac{p^e}{1+\cdots+p^e}\\
=
\prod_{p^e\parallel M}\prod_{f=1}^{e}\left(1-\frac{1}{1+\cdots+p^f}\right)
\]
as it is mentioned in the proof of Satz 3 of \cite{Borho2}. Thus, \eqref{EQ:fundamental_eq}
is rewritten as
\[
\sum_{i=1}^{k}\prod_{p^e\parallel M_i}\prod_{f=1}^{e}\left(1-\frac{1}{1+\cdots+p^f}\right)=1.
\]
If we remove any factor from any summand, then the left-hand side becomes larger.
Thus we can apply Lemma \ref{Lem:HB_ineq1} and obtain
\begin{equation}
\label{EQ:Omega_after_lemma}
\sigma(M_1)\cdots\sigma(M_k)
\le
\prod_{i=1}^{k}\prod_{p^e\parallel M_i}\prod_{f=1}^{e}(1+\cdots+p^f)
\le
F_L(2)=2^{2^{L}}-2^{2^{L-1}},
\end{equation}
where $L$ is given by the number of factors in the product above, so
\[
L=\sum_{i=1}^{k}\sum_{p^e\parallel M}e=\sum_{i=1}^{k}\Omega(M_i)=\Omega(M_1\cdots M_k).
\]
Note that $M_i$ can share some common factor since we do not assume anything on the divisibility.
However, this does not affect the above arguments.
Now by using the inequality of the arithmetic and geometric
mean in \eqref{EQ:fundamental_eq}, we find that
\[
1
=\frac{M_1}{\sigma(M_1)}+\cdots+\frac{M_k}{\sigma(M_k)}
\ge k\left(\frac{M_1\cdots M_k}{\sigma(M_1)\cdots\sigma(M_k)}\right)^{1/k}
\]
so
\[
\sigma(M_1)\cdots\sigma(M_k)
=
\left(\frac{\sigma(M_1)\cdots\sigma(M_k)}{M_1\cdots M_k}\right)M_1\cdots M_k
\ge
k^kM_1\cdots M_k.
\]
On inserting this into \eqref{EQ:Omega_after_lemma},
we arrive at the assertion for amicable tuples.

We next consider a unitary harmonious tuple $(M_{i})_{i=1}^{k}$.
By using
\[
\frac{M}{\sigma^\ast(M)}=\prod_{p^e\parallel M}\frac{p^e}{1+p^e}=\prod_{p^e\parallel M}\left(1-\frac{1}{1+p^e}\right)
\]
we can rewrite \eqref{EQ:fundamental_eq_unitary} as
\[
\sum_{i=1}^{k}\prod_{p^e\parallel M_i}\left(1-\frac{1}{1+p^e}\right)=1.
\]
Applying Lemma \ref{Lem:HB_ineq1} as above, we see that
\begin{equation}
\label{EQ:unitary_after_lemma}
\sigma^\ast(M_1)\cdots\sigma^\ast(M_k)
=
\prod_{i=1}^{k}\prod_{p^e\parallel M_i}(1+p^e)
\le
F_L(2)=2^{2^{L}}-2^{2^{L-1}},
\end{equation}
where $L$ is given by
\[
L=\sum_{i=1}^{k}\sum_{p^e\parallel M_i}1=\omega(M_1)+\cdots+\omega(M_k).
\]
By using the inequality of the arithmetic and geometric
mean in \eqref{EQ:fundamental_eq_unitary}, we find
\[
\sigma^\ast(M_1)\cdots\sigma^\ast(M_k)
=
\left(\frac{\sigma^\ast(M_1)\cdots\sigma^\ast(M_k)}{M_1\cdots M_k}\right)M_1\cdots M_k
\ge
k^kM_1\cdots M_k
\]
On inserting this into \eqref{EQ:unitary_after_lemma},
we obtain the assertion for unitary harmonious pairs.
\end{proof}

%%%%%%%%%%%%%%%%%%%%%%%%%%%%%%%%%%%%%%%%
%
% Section: Induction
%
%%%%%%%%%%%%%%%%%%%%%%%%%%%%%%%%%%%%%%%%
\section{The induction lemma}
\label{Section:induction}
In this section, we prove an induction lemma.
We start with a lemma on the divisibility,
whose special case is also used in the proof of Lemma 4 of \cite{Pollack_bound}.
%%%%%%%%%%%%%%%%%%%%
%
% Divisibility Lemma
%
%%%%%%%%%%%%%%%%%%%%
\begin{lemma}
\label{Lem:divisibility_lemma}
Let $k\ge2$ be an integer, $(M_i)_{i=1}^{k}$ be an anarchy harmonious tuple
and suppose that a tuple of decompositions
\[
M_i=U_iV_i,\quad
(U_i,V_i)=1,\quad
U:=U_{1}\cdots U_{k}>1
\]
is given.
Then we have
\[
\sum_{i=1}^{k}
\frac{V_i}{\sigma(V_i)}
\prod_{\substack{p\mid U_i\\p\in\mathcal{S}}}\left(1-\frac{1}{p}\right)
\neq1
\]
for any set $\mathcal{S}$ of prime factors of $U$.
\end{lemma}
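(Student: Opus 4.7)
The plan is to prove the lemma by contradiction: assume the displayed sum equals $1$, clear denominators to obtain an integer identity, and derive a contradiction on $p$-adic valuations for a carefully chosen prime $p$. First I would dispose of the trivial sub-case $\mathcal{S} = \emptyset$: here each inner product is empty and equals $1$, so the assumption reduces to $\sum_i V_i/\sigma(V_i) = 1$. Combined with the harmonious identity $\sum_i U_i V_i/(\sigma(U_i)\sigma(V_i)) = 1$ (which uses $\sigma(M_i) = \sigma(U_i)\sigma(V_i)$ from $(U_i,V_i)=1$), this gives
\[
\sum_{i=1}^{k}\frac{V_i}{\sigma(V_i)}\left(1-\frac{U_i}{\sigma(U_i)}\right) = 0.
\]
All summands are non-negative, and $U>1$ forces some $U_i>1$, making the corresponding term strictly positive — contradiction.

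For the main sub-case $\mathcal{S} \neq \emptyset$, I would choose $p = \max \mathcal{S}$. The anarchy condition implies the $M_i$ (hence the $U_i$) are pairwise coprime, so $p \mid U$ gives a unique index $j_0$ with $p \mid U_{j_0}$. Writing $R = \prod_{p' \in \mathcal{S}} p'$ and $D_i = \prod_{p' \mid U_i,\, p' \in \mathcal{S}} p'$, multiplying the assumed equation by $R\prod_j \sigma(V_j)$ produces the integer identity
\[
\sum_{i=1}^{k} V_i \prod_{\substack{p' \mid U_i \\ p' \in \mathcal{S}}}(p' - 1) \cdot \frac{R}{D_i} \prod_{\substack{j=1 \\ j \neq i}}^{k} \sigma(V_j) = R \prod_{j=1}^{k} \sigma(V_j).
\]
I would then compute $v_p$ of both sides. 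The maximality of $p$ forces $v_p(p'-1)=0$ for every $p' \in \mathcal{S}$, as $0 < p'-1 < p$. The anarchy condition $(M_{j_0},\sigma(M_j))=1$ for $j \neq j_0$, together with $\sigma(V_j)\mid\sigma(M_j)$, implies $p\nmid\sigma(V_j)$ for every $j \neq j_0$; also $p\nmid V_i$ for all $i$ (by pairwise coprimality of the $M_i$ and by $(U_{j_0},V_{j_0})=1$). Hence the summand at $i = j_0$ has every factor coprime to $p$, giving $v_p=0$, while for $i \neq j_0$ the quotient $R/D_i$ still contains $p$ (since $p\nmid U_i$), forcing $v_p \geq 1$. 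Therefore the left-hand side has $v_p = 0$, whereas the right-hand side has $v_p \geq v_p(R) = 1$ — contradiction.

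The delicate point is the choice of $p$. An arbitrary $p \in \mathcal{S}$ need not satisfy $p \nmid (p'-1)$ for the other $p' \in \mathcal{S}$ (consider $p' \equiv 1 \pmod p$), which would corrupt the valuation count; taking $p = \max \mathcal{S}$ sidesteps this issue cleanly by making every $p'-1$ a positive integer smaller than $p$. The remaining ingredients — pairwise coprimality of the $M_i$ and the key consequence $p \nmid \sigma(V_j)$ for $j \neq j_0$ — are precisely the content of the anarchy hypothesis.
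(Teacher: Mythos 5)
Your proof is correct and follows essentially the same route as the paper's: the paper likewise first rules out $\mathcal{S}=\emptyset$ via the harmonious identity, then clears denominators, picks the largest prime $P\in\mathcal{S}$, and uses pairwise coprimality (from anarchy) plus $(M_{j_0},\sigma(M_j))=1$ to show the $j_0$-th summand is not divisible by $P$ while all other terms and the right-hand side are, the only cosmetic difference being that you phrase the contradiction as a $p$-adic valuation mismatch whereas the paper concludes $P\mid V_{j_0}$ and then contradicts $(U_{j_0},V_{j_0})=1$.
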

%%%%%%%%%%%%%%%%%%%%
\begin{proof}
Assume to the contrary that
\begin{equation}
\label{EQ:assumption_divisibility_lemma}
\sum_{i=1}^{k}
\frac{V_i}{\sigma(V_i)}
\prod_{\substack{p\mid U_i\\p\in\mathcal{S}}}\left(1-\frac{1}{p}\right)
=1
\end{equation}
for some set $\mathcal{S}$ of prime factors of $U$.
We first claim that $\mathcal{S}$ is non-empty.
Since $U=U_{1}\cdots U_{k}>1$, we have $U_{i}/\sigma(U_{i})<1$ for some $i$.
Thus, by \eqref{EQ:fundamental_eq},
\[
1=\sum_{i=1}^{k}\frac{U_i}{\sigma(U_i)}\frac{V_i}{\sigma(V_i)}<\sum_{i=1}^{k}\frac{V_i}{\sigma(V_i)}.
\]
Comparing this with \eqref{EQ:assumption_divisibility_lemma}, $\mathcal{S}$ should be non-empty.
By multiplying \eqref{EQ:assumption_divisibility_lemma} by
\[
\prod_{i=1}^{k}\sigma(V_i)\prod_{p\in\mathcal{S}}p,
\]
we have
\begin{equation}
\label{EQ:after_multiplying}
\sum_{i=1}^{k}
V_i\prod_{\substack{j=1\\i\neq j}}^{k}\sigma(V_j)
\prod_{\substack{p\mid U_i\\p\in\mathcal{S}}}(p-1)
\prod_{\substack{p\nmid U_i\\p\in\mathcal{S}}}p
=
\prod_{i=1}^{k}\sigma(V_i)\prod_{p\in\mathcal{S}}p.
\end{equation}
Let $P$ be the largest prime in $\mathcal{S}$, which exists since $\mathcal{S}$ is non-empty.
By symmetry, we may assume $P\mid U_1$.
Then in \eqref{EQ:after_multiplying},
all terms except the case $i=1$ on the left-hand side and the right-hand side
are divisible by $P$ since $(M_i)_{i=1}^{k}$ is anarchy so $U_{1}.\ldots,U_{k}$ are pairwise coprime.
This implies
\begin{equation}
\label{EQ:P_divides}
P\mid V_1\prod_{j=2}^{k}\sigma(V_j)
\prod_{\substack{p\mid U_1\\p\in\mathcal{S}}}(p-1)
\prod_{\substack{p\nmid U_1\\p\in\mathcal{S}}}p
\end{equation}
Since $P$ is the largest prime in $\mathcal{S}$ and $(M_i)_{i=1}^{k}$ is anarchy, we find
\[
P\nmid \prod_{j=2}^{k}\sigma(V_j)
\prod_{\substack{p\mid U_1\\p\in\mathcal{S}}}(p-1)\prod_{\substack{p\nmid U_1\\p\in\mathcal{S}}}p.
\]
Thus by \eqref{EQ:P_divides}, $P\mid V_1$,
which contradicts to $P\mid U_1$ and $(U_1,V_1)=1$.
\end{proof}

%%%%%%%%%%%%%%%%%%%%
%
% Induction Lemma
%
%%%%%%%%%%%%%%%%%%%%
By using Lemma \ref{Lem:divisibility_lemma},
we can prove now the following variant of Heath-Brown's induction lemma,
which corresponds to Lemma 1.5 of \cite{Nielsen_New}.
\begin{lemma}
\label{Lem:HB_induction}
Let $k\ge2$ be an integer, $(M_i)_{i=1}^{k}$ be an anarchy harmonious tuple
and suppose that a tuple of decompositions
\[
M_i=U_iV_i,\quad
(U_i,V_i)=1,\quad
U:=U_{1}\cdots U_{k}>1,\quad
V:=V_{1}\cdots V_{k},
\]
and a set $\mathcal{S}$ of prime factors of $U$ are given.
Then there is a tuple of decompositions
\[
M'_i=U'_iV'_i,\quad
(U'_i,V'_i)=1,\quad
U':=U'_{1}\cdots U'_{k},\quad
V':=V'_{1}\cdots V'_{k},\quad
V\parallel V',
\]
and a set $\mathcal{S}'$ of prime factors of $U'$ with the following conditions\,{\upshape:}
\begin{enumerate}
\renewcommand{\labelenumi}{{\upshape(\roman{enumi})}}
\item $v:=|\mathcal{P}'|\ge1$,
where $\mathcal{P}':=\{p\colon\text{prime}\mid p\mid V',p\nmid V\}$,
\item we have
\[
\sigma(V')\Pi(\mathcal{S}')\Psi(\mathcal{P}')
\le
F_{v+w}(\sigma(V)\Pi(\mathcal{S})+1),
\]
where $w:=v+|\mathcal{S}'|-|\mathcal{S}|$,
\item if $w=0$, then the inequality in {\upshape(ii)} can be improved to
\[
\sigma(V')\Pi(\mathcal{S}')\Psi(\mathcal{P}')
\le
F_{v+w}(\sigma(V)\Pi(\mathcal{S})).\hphantom{{}+{}1}
\]
\end{enumerate}
\end{lemma}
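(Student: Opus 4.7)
The plan is to extract a Diophantine inequality from the harmonious equation~\eqref{EQ:fundamental_eq} and invoke one of Lemma~\ref{Lem:HB_ineq1} or~\ref{Lem:HB_ineq2}, with Lemma~\ref{Lem:divisibility_lemma} supplying the strict truncation hypothesis. First, I would exploit the identity $p^e/\sigma(p^e) = \frac{p-1}{p}(1-p^{-e-1})^{-1}$, which follows from $\sigma(p^e) = (p^{e+1}-1)/(p-1)$, to rewrite the harmonious equation as
\[
1 = \sum_{i=1}^k \frac{V_i}{\sigma(V_i)}\prod_{p\mid U_i}\frac{p-1}{p}\left(1 - \frac{1}{p^{e_{p,i}+1}}\right)^{-1},
\]
where $e_{p,i}$ is the exponent of $p$ in $U_i$. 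Dropping the factors for primes $p\notin\mathcal{S}$ (each such $p^e/\sigma(p^e)\le 1$) produces
\[
\sum_{i=1}^k \frac{V_i}{\sigma(V_i)}\prod_{\substack{p\mid U_i\\p\in\mathcal{S}}}\frac{p-1}{p}\left(1-\frac{1}{p^{e_{p,i}+1}}\right)^{-1}\ge 1,
\]
and the anarchy hypothesis makes the $V_i$ pairwise coprime while forcing each $p\in\mathcal{S}$ to divide a unique $U_i$, so the denominators multiply to $a = \sigma(V)\Pi(\mathcal{S})$. This puts us in the framework of Lemma~\ref{Lem:HB_ineq2} with $m_j$ ranging over the prime powers $p^{e_{p,i}+1}$ for $p\in\mathcal{S}$.

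Next, I would order the $m_j$ increasingly and take $R$ to be the smallest length for which the partial sum is still $\ge 1$. When the fully truncated ``naked'' sum $\sum_i \frac{V_i}{\sigma(V_i)}\prod_{p\mid U_i,\,p\in\mathcal{S}}(1-1/p)$ lies below~$1$---which it must unless it exceeds $1$, since Lemma~\ref{Lem:divisibility_lemma} forbids equality---this minimality gives $R\ge 1$ along with the strict inequality~\eqref{EQ:HB_assump_B2}. Lemma~\ref{Lem:HB_ineq2} then delivers
\[
\sigma(V)\Pi(\mathcal{S})\prod_{j=1}^R(m_j-1)\le F_R\bigl(\sigma(V)\Pi(\mathcal{S})\bigr).
\]
Setting $\mathcal{P}'$ to be the primes contributing to $m_1,\ldots,m_R$, $V'_i = V_i\prod_{p\in\mathcal{P}',\,p\mid U_i}p^{e_{p,i}}$, and $\mathcal{S}'=\mathcal{S}\setminus\mathcal{P}'$, the identity $m_j-1 = (p-1)\sigma(p^{e_{p,i}})$ together with $\sigma(V') = \sigma(V)\prod_{p\in\mathcal{P}'}\sigma(p^{e_{p,i}})$ rewrites the bound as $\sigma(V')\Pi(\mathcal{S}')\Psi(\mathcal{P}')\le F_R(\sigma(V)\Pi(\mathcal{S}))$ with $w=0$, which is conclusion~(iii).

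In the complementary branch where the naked sum already exceeds $1$, the same program is executed with Lemma~\ref{Lem:HB_ineq1} in place of Lemma~\ref{Lem:HB_ineq2}: one enlarges $\mathcal{S}$ by successively adjoining primes of the $U_i$, each contributing a factor $(1-1/m)$, and stops at the smallest augmentation that brings the sum down to $\le 1$, with Lemma~\ref{Lem:divisibility_lemma} again securing the strict inequality~\eqref{EQ:HB_assump_B1} at the step before. The weaker conclusion $F_R(\sigma(V)\Pi(\mathcal{S})+1)$ of Lemma~\ref{Lem:HB_ineq1} produces the generic bound~(ii), and the parameter $w = v + |\mathcal{S}'| - |\mathcal{S}|$ absorbs the net change in the tracked prime set $\mathcal{S}'$. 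The principal obstacle will be the bookkeeping: choosing $\mathcal{P}'$ and $\mathcal{S}'$ consistently across the two cases so that $V\parallel V'$, that $\mathcal{S}'$ consists of primes dividing the updated $U'$, and that the Diophantine lemma's truncation hypothesis is verified in each branch by the appropriate instance of Lemma~\ref{Lem:divisibility_lemma}.
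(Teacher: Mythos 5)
Your outline has the right skeleton --- the two-case split on the sign of $H - 1$, the role of Lemma~\ref{Lem:divisibility_lemma} in forbidding $H=1$, and the use of Lemma~\ref{Lem:HB_ineq2} in the $H<1$ branch to extract $\mathcal{P}'\subseteq\mathcal{S}$, yielding conclusion~(iii) --- and that branch is handled correctly.

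However, there is a genuine gap in the $H>1$ branch. You write that ``the same program is executed with Lemma~\ref{Lem:HB_ineq1} \emph{in place of} Lemma~\ref{Lem:HB_ineq2}'' and that ``the weaker conclusion $F_R(\sigma(V)\Pi(\mathcal{S})+1)$ of Lemma~\ref{Lem:HB_ineq1} produces the generic bound~(ii).'' That cannot work: Lemma~\ref{Lem:HB_ineq1} only bounds $a\prod_j m_j = \sigma(V)\Pi(\mathcal{S})\Pi(\mathcal{T})$ for the newly adjoined primes $\mathcal{T}$, whereas conclusion~(ii) concerns $\sigma(V')\Pi(\mathcal{S}')\Psi(\mathcal{P}')$, which carries the extra factors $\Phi(\mathcal{P}')$ and $\sigma\bigl(\prod_{p^e\parallel U,\,p\in\mathcal{P}'}p^e\bigr)$ that never appear on the left of~\eqref{EQ:M_bound1} in Lemma~\ref{Lem:HB_ineq1}. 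One lemma \emph{cannot} be swapped for the other here. The correct argument, as the paper does, is a \emph{two-stage composition}: first use Lemma~\ref{Lem:HB_ineq1} (with the strict inequality secured by Lemma~\ref{Lem:divisibility_lemma}) to produce a disjoint set $\mathcal{T}$ with $\sigma(V)\Pi(\mathcal{S})\Pi(\mathcal{T})\le F_w(\sigma(V)\Pi(\mathcal{S})+1)$ \emph{and} with the naked sum over $\mathcal{S}\cup\mathcal{T}$ now strictly less than~$1$; then run the reciprocal program of Lemma~\ref{Lem:HB_ineq2} on $\mathcal{S}\cup\mathcal{T}$ (not on $\mathcal{S}$) to extract a non-empty $\mathcal{P}'\subseteq\mathcal{S}\cup\mathcal{T}$ satisfying $\sigma(V')\Pi(\mathcal{S}')\Psi(\mathcal{P}')\le F_v(\sigma(V)\Pi(\mathcal{S})\Pi(\mathcal{T}))$; finally chain the two bounds via $F_v(F_w(x))\le F_v(x^{2^w})=F_{v+w}(x)$. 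Without this second application of Lemma~\ref{Lem:HB_ineq2}, the $\Psi(\mathcal{P}')$ and $\sigma(V')$ factors in~(ii) are unaccounted for, and the bound does not follow.

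A smaller point: you should also say explicitly that $\mathcal{S}'=(\mathcal{S}\cup\mathcal{T})\setminus\mathcal{P}'$, so that $w=v+|\mathcal{S}'|-|\mathcal{S}|=|\mathcal{T}|$; taking $\mathcal{P}'=\mathcal{T}$, as your $H>1$ paragraph implicitly suggests, is not the right choice and would break the bookkeeping.
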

%%%%%%%%%%%%%%%%%%%%
\begin{proof}
We first show that there is a set $\mathcal{T}$ of prime factors of $U$ satisfying
\begin{equation}
\label{EQ:T_disjoint_cond}
\mathcal{S}\cap\mathcal{T}=\emptyset,
\end{equation}
\begin{equation}
\label{EQ:T_size_cond}
\sum_{i=1}^{k}
\frac{V_i}{\sigma(V_i)}
\prod_{\substack{p\mid U_i\\p\in\mathcal{S}\cup\mathcal{T}}}\left(1-\frac{1}{p}\right)<1
\end{equation}
and
\begin{equation}
\label{EQ:T_bound}
\sigma(V)\Pi(\mathcal{S})\Pi(\mathcal{T})\le F_{w}(\sigma(V)\Pi(\mathcal{S})+1),\quad
w:=|\mathcal{T}|.
\end{equation}
This $w$ will be the same quantity as in the condition (ii).
By Lemma \ref{Lem:divisibility_lemma}, the quantity
\begin{equation}
\label{EQ:S_form}
H:=
\sum_{i=1}^{k}\frac{V_i}{\sigma(V_i)}\prod_{\substack{p\mid U_i\\p\in\mathcal{S}}}\left(1-\frac{1}{p}\right)
\end{equation}
never equals $1$. We consider two cases separately according to the size of $H$.

%%%%%%%%%%
If $H<1$, we just take $\mathcal{T}=\emptyset$ so that $w=0$.
This choice obviously satisfies the conditions
\eqref{EQ:T_disjoint_cond}, \eqref{EQ:T_size_cond} and \eqref{EQ:T_bound}.
Thus the case $H<1$ is done.

%%%%%%%%%%
We next consider the case $H>1$.
Since $U>1$, we have
\[
\frac{U_i}{\sigma(U_i)}>\prod_{p\mid U_i}\left(1-\frac{1}{p}\right)
\]
for some $i$.
Thus, by \eqref{EQ:fundamental_eq}, we see that
\begin{equation}
\label{EQ:beyond}
\sum_{i=1}^{k}\frac{V_i}{\sigma(V_i)}\prod_{p\mid U_i}\left(1-\frac{1}{p}\right)
<1.
\end{equation}
By using notation
\[
a_i=\sigma(V_i)\prod_{\substack{p\mid U_i\\p\in\mathcal{S}}}p,\quad
b_i=V_i\prod_{\substack{p\mid U_i\\p\in\mathcal{S}}}(p-1),
\]
we can rewrite \eqref{EQ:beyond} as
\[
\sum_{i=1}^{k}
\frac{b_i}{a_i}\prod_{\substack{p\mid U_i\\p\not\in\mathcal{S}}}\left(1-\frac{1}{p}\right)
<1.
\]
Note that $a_i\ge b_i$ for all $i$.
Thus, by comparing this inequality with $H>1$ and examining from the smallest prime factors
of $U$ outside $\mathcal{S}$, we can find a non-empty set $\mathcal{T}=\{p_1,\ldots,p_w\}$
of prime factors of $U$ with $p_1<\cdots<p_w$,
which satisfies $\mathcal{S}\cap\mathcal{T}=\emptyset$ and two inequalities
\begin{equation}
\label{EQ:HB_assump_T}
\sum_{i=1}^{k}
\frac{b_i}{a_i}\prod_{\substack{j=1\\p\mid U_i}}^{w}\left(1-\frac{1}{p_j}\right)
\le1,\quad
\sum_{i=1}^{k}
\frac{b_i}{a_i}\prod_{\substack{j=1\\p\mid U_i}}^{w-1}\left(1-\frac{1}{p_j}\right)
>1.
\end{equation}
By applying Lemma \ref{Lem:HB_ineq1} to this pair of inequalities,
we find that
\[
a\Pi(\mathcal{T})=a\prod_{j=1}^{w}p_j\le F_w(a+1),\quad
a=a_1\cdots a_k=\sigma(V)\Pi(\mathcal{S}),
\]
i.e.~\eqref{EQ:T_bound} holds. For \eqref{EQ:T_size_cond},
we expand the definition of $a_i$ and $b_i$ in \eqref{EQ:HB_assump_T} to obtain
\[
\sum_{i=1}^{k}
\frac{V_i}{\sigma(V_i)}\prod_{\substack{p\mid U_i\\p\in\mathcal{S}\cup\mathcal{T}}}\left(1-\frac{1}{p}\right)
\le1.
\]
Then this equality cannot hold by Lemma \ref{Lem:divisibility_lemma}
so the condition \eqref{EQ:T_size_cond} holds.
Therefore, in any case, we succeeded to find a set $\mathcal{T}$ satisfying
the desired conditions.

%%%%%%%%%%
We next show that there is a non-empty subset $\mathcal{P}'$ of $\mathcal{S}\cup\mathcal{T}$
which satisfies
\begin{equation}
\label{EQ:P_bound}
\sigma(V)\Pi(\mathcal{S})\Pi(\mathcal{T})
\prod_{\substack{p^e\parallel U\\p\in\mathcal{P}'}}(p^{e+1}-1)
\le
F_{v}(\sigma(V)\Pi(\mathcal{S})\Pi(\mathcal{T})),\quad
v=|\mathcal{P}'|.
\end{equation}
These $\mathcal{P}'$ and $v$ will be the same objects as in the condition (i).
Since $n/\sigma(n)\le 1$ for any positive integer $n$,
\eqref{EQ:fundamental_eq} implies
\begin{equation}
\label{EQ:beyond2}
\sum_{i=1}^{k}
\frac{V_i}{\sigma(V_i)}
\prod_{\substack{p^e\parallel U_i\\p\in\mathcal{S}\cup\mathcal{T}}}\frac{1-1/p}{1-1/p^{e+1}}
\ge1
\end{equation}
By using notations
\[
a'_i=\sigma(V_i)\!\!\!\prod_{\substack{p\mid U_i\\p\in\mathcal{S}\cup\mathcal{T}}}\!\!\!p,\quad
b'_i=V_i\!\!\!\prod_{\substack{p\mid U_i\\p\in\mathcal{S}\cup\mathcal{T}}}\!\!\!(p-1),
\]
we can rewrite \eqref{EQ:beyond2} as
\begin{equation}
\label{EQ:beyond3}
\sum_{i=1}^{k}
\frac{b'_i}{a'_i}
\prod_{\substack{p^e\parallel U_i\\p\in\mathcal{S}\cup\mathcal{T}}}
\left(1-\frac{1}{p^{e+1}}\right)^{-1}
\ge1.
\end{equation}
Similarly, \eqref{EQ:T_size_cond} can be rewritten as
\begin{equation}
\label{EQ:T_size_cond2}
\sum_{i=1}^{k}
\frac{b'_i}{a'_i}<1.
\end{equation}
By comparing \eqref{EQ:beyond3} and \eqref{EQ:T_size_cond2}
and examining from the smallest values of
\[
p^{e+1},\quad p\in\mathcal{S}\cup\mathcal{T},\ p^e\parallel U,
\]
we can find a non-empty subset $\mathcal{P}'=\{P_1,\ldots,P_v\}$ of $\mathcal{S}\cup\mathcal{T}$
with
\[
P_1^{e_1+1}<\cdots<P_v^{e_v+1},\quad
P_j^{e_j}\parallel U
\]
satisfying two inequalities
\begin{equation}
\label{EQ:HB_assump_P}
\sum_{i=1}^{k}
\frac{b'_i}{a'_i}\prod_{\substack{j=1\\P_j|U_i}}^{v}\left(1-\frac{1}{P_j^{e_j+1}}\right)^{-1}
\ge1,\quad
\sum_{i=1}^{k}
\frac{b'_i}{a'_i}\prod_{\substack{j=1\\P_j|U_i}}^{v-1}\left(1-\frac{1}{P_j^{e_j+1}}\right)^{-1}
<1.
\end{equation}
Then by applying Lemma \ref{Lem:HB_ineq2} to \eqref{EQ:HB_assump_P}, we find that
\[
a'
\prod_{j=1}^{v}(P_j^{e_j+1}-1)
\le
F_{v}(a'),\quad
a'=a'_1\cdots a'_k=\sigma(V)\Pi(\mathcal{S})\Pi(\mathcal{T}),
\]
i.e.~\eqref{EQ:P_bound} holds. Thus our $\mathcal{P}'$ satisfies the desired condition.

%%%%%%%%%%
Finally, we choose
\[
\mathcal{S}'=(\mathcal{S}\cup\mathcal{T})\setminus\mathcal{P}',\quad
V'_i=V_i\prod_{\substack{p^e\parallel U_i\\p\in\mathcal{P}'}}p^e,\quad
M_{i}=U'_{i}V'_{i}.
\]
Then it is clear that $(U'_{i},V'_{i})=1$, $V\parallel V'$.
The set $\mathcal{S}'$ consists of some prime factors of $U'$ since
the prime factors of $U'$ are those of $U$ outside the set $\mathcal{P}'$.

The remaining task is to check the conditions (i), (ii) and (iii).
Note that the notations on $\mathcal{P}'$ and $v$ keep its consistency.
Since $\mathcal{P}'$ is non-empty, the condition (i) of the lemma is satisfied.
For the consistency on $w$, it suffices to see
\[
v+|\mathcal{S}'|-|\mathcal{S}|
=
|\mathcal{P}'|+|\mathcal{S}\cup\mathcal{T}|-|\mathcal{P}'|-|\mathcal{S}|
=
|\mathcal{S}\cup\mathcal{T}|-|\mathcal{S}|
=
|\mathcal{T}|.
\]
We prove the inequality in (ii) and (iii).
By our choice of $V'_i$ and $\mathcal{S}'$,
\begin{align*}
\sigma(V')\Pi(\mathcal{S}')\Pi(\mathcal{P}')\Phi(\mathcal{P}')
&=
\sigma(V)\sigma(\prod_{\substack{p^e\parallel U\\p\in\mathcal{P}'}}p^e)
\Pi(\mathcal{S})\Pi(\mathcal{T})\Phi(\mathcal{P}')\\
&=
\sigma(V)\Pi(\mathcal{S})\Pi(\mathcal{T})
\prod_{\substack{p^e\parallel U\\p\in\mathcal{P}'}}(p^{e+1}-1)
\end{align*}
By \eqref{EQ:P_bound} and the definition of $\Psi$, this implies
\[
\sigma(V')\Pi(\mathcal{S}')\Psi(\mathcal{P}')
\le
F_{v}(\sigma(V)\Pi(\mathcal{S})\Pi(\mathcal{T})).
\]
If $w=0$, then $\mathcal{T}=\emptyset$ so that this inequality already gives the inequality in (iii).
We substitute \eqref{EQ:T_bound} here.
Then we arrive at
\begin{align*}
\sigma(V')\Pi(\mathcal{S}')\Psi(\mathcal{P}')
&\le
F_{v}(F_{w}(\sigma(V)\Pi(\mathcal{S})+1))\\
&\le
F_{v}((\sigma(V)\Pi(\mathcal{S})+1)^{2^{w}})\\
&=
F_{v+w}(\sigma(V)\Pi(\mathcal{S})+1).
\end{align*}
Thus the inequality in (ii) also holds.
This completes the proof.
\end{proof}

%%%%%%%%%%%%%%%%%%%%%%%%%%%%%%%%%%%%%%%%
%
% Section: Completion of the proof
%
%%%%%%%%%%%%%%%%%%%%%%%%%%%%%%%%%%%%%%%%
\section{Completion of the proof of Theorem \ref{Thm:main}}
%In this section, we finally prove Theorem \ref{Thm:main}.
We start with carrying out the induction given in Section \ref{Section:induction}.

%%%%%%%%%%%%%%%%%%%%
%
% Induction steps
%
%%%%%%%%%%%%%%%%%%%%
\begin{lemma}
\label{Lem:carry_out_induction}
For any anarchy harmonious tuple $(M_{i})_{i=1}^{k}$ with $\omega(M_1\cdots M_k)=K$,
\[
\sigma(M_1\cdots M_k)\frac{\Phi(\mathcal{P})}{\Pi(\mathcal{P})}\le F_{2K}(2)\Pi(\mathcal{P})^{-2},
\]
where $\mathcal{P}$ is the set of all prime factors of $M_1\cdots M_k$.
\end{lemma}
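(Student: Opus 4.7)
The plan is to carry out the induction from Lemma~\ref{Lem:HB_induction} starting from the trivial decomposition. Setting $U_i^{(0)} = M_i$, $V_i^{(0)} = 1$, and $\mathcal{S}^{(0)} = \emptyset$, we have $U^{(0)} = M_1 \cdots M_k > 1$, so Lemma~\ref{Lem:HB_induction} applies. At each step $s$ the lemma produces new data $V_i^{(s+1)}, \mathcal{S}^{(s+1)}, \mathcal{P}^{(s+1)}$ with $V^{(s)} \parallel V^{(s+1)}$ and $v_{s+1} := |\mathcal{P}^{(s+1)}| \ge 1$. Since each iteration strictly enlarges the support of $V$ by at least one prime, the process halts after $N \le K$ steps, at which point $U^{(N)} = 1$, $V^{(N)} = M_1 \cdots M_k$, $\mathcal{S}^{(N)} = \emptyset$, and the sets $\mathcal{P}^{(1)}, \ldots, \mathcal{P}^{(N)}$ form a partition of $\mathcal{P}$.

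Writing $r_s = v_s + w_s$ with $w_s = v_s + |\mathcal{S}^{(s)}| - |\mathcal{S}^{(s-1)}|$, telescoping together with $\mathcal{S}^{(0)} = \mathcal{S}^{(N)} = \emptyset$ yields $\sum_{s=1}^{N} r_s = 2\sum_{s=1}^{N} v_s = 2K$, which is the numerical equality underlying the $F_{2K}$ bound. We then chain the per-step inequalities supplied by conditions (ii) and (iii) of Lemma~\ref{Lem:HB_induction}, using Lemma~\ref{Lem:F_scaling} to absorb the accumulated product $W_s := \prod_{t=1}^{s} \Psi(\mathcal{P}^{(t)})$ into the argument of $F_{r_{s+1}}$, together with the identity $F_r(F_s(x)+1) \le F_{r+s}(x)$ (which follows from $F_s(x)+1 \le x^{2^s}$ and $F_r(x^{2^s}) = F_{r+s}(x)$). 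The target is to prove, by induction on $s$, a bound of the form
\[
\sigma(V^{(s)}) \Pi(\mathcal{S}^{(s)}) W_s \le F_{R_s}(2), \qquad R_s := \sum_{t=1}^{s} r_t.
\]
Specialized to $s = N$, with $\mathcal{S}^{(N)} = \emptyset$, $V^{(N)} = M_1\cdots M_k$, $W_N = \Psi(\mathcal{P})$, and $R_N = 2K$, this reads $\sigma(M_1\cdots M_k)\,\Psi(\mathcal{P}) \le F_{2K}(2)$; dividing through by $\Pi(\mathcal{P})^2$ recovers the claimed inequality.

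The main obstacle is the inductive step of the chaining above. The $+1$ sitting inside $F_{v+w}$ in condition (ii) threatens, if handled naively, to cost an extra shift in the exponent at each step and inflate the final bound from $F_{2K}$ to something like $F_{3K}$. A clean chain is made possible by invoking condition (iii) whenever $w_s = 0$, which removes the $+1$ entirely, and by using the scaling inequality of Lemma~\ref{Lem:F_scaling} to fold $W_s$ into the argument of $F_{r_{s+1}}$ without further loss, so that the cumulative exponent remains exactly $R_N = 2K$ rather than degrading.
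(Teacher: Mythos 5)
Your overall strategy is the one the paper uses: iterate Lemma~\ref{Lem:HB_induction} from the trivial decomposition, bookkeep $\sum_{t=1}^{N}(v_t+w_t)=2K$, and chain the per-step bounds with Lemma~\ref{Lem:F_scaling} to prove $\sigma(V^{(s)})\Pi(\mathcal{S}^{(s)})W_s\le F_{R_s}(2)$ by induction on $s$. The base case is fine: at $s=1$, condition (ii) gives $F_{r_1}(1+1)=F_{r_1}(2)=F_{R_1}(2)$.

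The gap is in the inductive step for $s\ge1$ when $w_{s+1}\ge1$. Condition (ii) yields $\sigma(V^{(s+1)})\Pi(\mathcal{S}^{(s+1)})\Psi(\mathcal{P}^{(s+1)})\le F_{r_{s+1}}(T_s+1)$ with $T_s:=\sigma(V^{(s)})\Pi(\mathcal{S}^{(s)})$, and you need $W_s\,F_{r_{s+1}}(T_s+1)\le F_{R_{s+1}}(2)$. You propose to dispose of the $+1$ via $F_r(F_s(x)+1)\le F_{r+s}(x)$, but that identity is in the wrong place: after using Lemma~\ref{Lem:F_scaling} to fold $W_s$ into the argument you obtain $W_s^{\,1-2^{r_{s+1}-1}}F_{r_{s+1}}(W_sT_s+W_s)$, so the $+1$ has become $+W_s$, and the inductive hypothesis $T_sW_s\le F_{R_s}(2)$ controls $W_sT_s$ but not $W_s$ by itself. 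The auxiliary bound $W_s\le 2^{2^{R_s-1}}$ that would be needed to make $W_sT_s+W_s\le 2^{2^{R_s}}$ does \emph{not} follow from the hypothesis: for instance $T_s=6$, $W_s=\Psi(\{5\})=20$, $R_s=3$ has $T_sW_s=120\le F_3(2)=240$ while $W_s=20>16=2^{2^{R_s-1}}$. The missing ingredient is the observation that for $s\ge1$ one has $T_s\ge\sigma(V^{(s)})\ge3$, hence $T_s+1\le\frac43T_s$; combining this with $\Psi(\mathcal{P}^{(j)})\ge2>(4/3)^2$ and the slack $2^{r_{s+1}-1}\ge2$ available in Lemma~\ref{Lem:F_scaling} precisely because $w_{s+1}\ge1$ forces $r_{s+1}\ge2$, one absorbs both the factor $4/3$ and the outer $W_s$ with no loss of exponent. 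Once that piece is inserted, your chain closes exactly as you describe.
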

\begin{proof}
Let $(M_{i})_{i=1}^{k}$ be an anarchy harmonious tuple with $K=\omega(M_1\cdots M_k)$.
We apply Lemma \ref{Lem:HB_induction} inductively to construct tuples of decompositions
\[
M_{i}(\nu)=U_{i}(\nu)V_{i}(\nu),\quad(U_{i}(\nu),V_{i}(\nu))=1,\quad(\nu=0,1,2,\ldots)
\]
and sets of primes
\[
\mathcal{S}(\nu),\ \mathcal{P}(\nu),\quad(\nu=0,1,2,\ldots),
\]
where $\mathcal{S}(\nu)$ is a set of prime factors of $U(\nu):=U_1(\nu)\cdots U_k(\nu)$.
We start with
\[
U_{i}(0)=M_i,\ V_{i}(0)=1,\quad\mathcal{S}(0)=\emptyset,\ \mathcal{P}(0)=\emptyset.
\]
In general steps, we apply Lemma \ref{Lem:HB_induction} to the $\nu$-th term with
\[
U_{i}=U_{i}(\nu),\ V_{i}=V_{i}(\nu),\quad\mathcal{S}=\mathcal{S}(\nu)
\]
and define the $(\nu+1)$-th term by
\[
U_{i}(\nu+1)=U'_{i},\ V_{i}(\nu+1)=V'_i,\quad
\mathcal{S}(\nu+1)=\mathcal{S}',\ \mathcal{P}(\nu+1)=\mathcal{P}'.
\]
Then we can continue this induction step as long as $U(\nu)>1$.
Let
\[
v(\nu)=|\mathcal{P}(\nu)|,\quad
w(\nu)=v(\nu)+|\mathcal{S}(\nu)|-|\mathcal{S}(\nu-1)|
\]
for $\nu\ge1$ as long as the induction step is available.
By the definition of $\mathcal{P}(\nu)$,
\begin{equation}
\label{EQ:P_partition}
\mathcal{P}(\nu)=\{p\colon\text{prime}\mid p\mid V(\nu)\}\setminus
\{p\colon\text{prime}\mid p\mid V(\nu-1)\},
\end{equation}
where $V(\nu):=V_1(\nu)\cdots V_k(\nu)$, so
\[
v(\nu)=\omega(V(\nu))-\omega(V(\nu-1))
\]
since $V(\nu-1)\parallel V(\nu)$.
Thus, by (i) of Lemma~\ref{Lem:HB_induction},
\begin{equation}
\label{EQ:v_sum}
s\le v(1)+\cdots+v(s)=\omega(V(s))\le \omega(M_1\cdots M_k)=K
\end{equation}
for any $s\ge1$ if the induction step is available until the $(s-1)$-th step.
Thus the induction step stops in finitely many steps.
Suppose that the induction step stops at the $n$-th step to produce
\[
U_{i}(n)=1,\ V_{i}(n)=M_i,\quad\mathcal{S}(n)=\emptyset.
\]
Thus by (iii) of Lemma~\ref{Lem:HB_induction}, for $2\le s\le n$ satisfying $w(s)=0$, we have 
\begin{align*}
\sigma(V(s))\Pi(\mathcal{S}(s))\Psi(\mathcal{P}(s))
\le
F_{v(s)+w(s)}\left(\sigma(V(s-1))\Pi(\mathcal{S}(s-1))\right)
\end{align*}
Since $v(s)+w(s)=v(s)\ge1$, Lemma \ref{Lem:F_scaling} gives
\begin{equation}
\label{EQ:induction_bound}
\begin{gathered}
\sigma(V(s))\Pi(\mathcal{S}(s))\Psi(\mathcal{P}(s))\\
\le
\Psi(\mathcal{P}(s-1))^{-1}
F_{v(s)+w(s)}\left(\sigma(V(s-1))\Pi(\mathcal{S}(s-1))\Psi(\mathcal{P}(s-1))\right).
\end{gathered}
\end{equation}
For remaining $2\le s\le n$ with $w(s)\ge1$, we use (ii) of Lemma~\ref{Lem:HB_induction} to obtain
\begin{align*}
\sigma(V(s))\Pi(\mathcal{S}(s))\Pi(\mathcal{P}(s))\Phi(\mathcal{P}(s))
&\le
F_{v(s)+w(s)}\left(\sigma(V(s-1))\Pi(\mathcal{S}(s-1))+1\right)\\
&\le
F_{v(s)+w(s)}\left(\frac{4}{3}\sigma(V(s-1))\Pi(\mathcal{S}(s-1))\right)
\end{align*}
since $\sigma(V(s-1))\ge 3$ for $s\ge2$.
Also, since $\mathcal{P}(s-1)\neq\emptyset$ for $s\ge2$, we find that
\[
\Psi(\mathcal{P}(s-1))=\Pi(\mathcal{P}(s-1))\Phi(\mathcal{P}(s-1))\ge2\ge\left(\frac{4}{3}\right)^2.
\]
Therefore, by Lemma \ref{Lem:F_scaling},
\begin{gather*}
\sigma(V(s))\Pi(\mathcal{S}(s))\Psi(\mathcal{P}(s))\\
\le
\Psi(\mathcal{P}(s-1))^{-2^{v(s)+w(s)-2}}
F_{v(s)+w(s)}\left(\sigma(V(s-1))\Pi(\mathcal{S}(s-1))\Psi(\mathcal{P}(s-1))\right)
\end{gather*}
so by using $v(s)+w(s)\ge v(s)+1\ge2$, we again arrive at \eqref{EQ:induction_bound}.
Thus the estimate \eqref{EQ:induction_bound} holds for every $2\le s\le n$.
Then by using  \eqref{EQ:induction_bound} inductively,
\begin{align*}
&\sigma(V(n))\Pi(\mathcal{S}(n))\Psi(\mathcal{P}(n))\\
{}\le{}&%
\Psi(\mathcal{P}(n-1))^{-1}
F_{v(n)+w(n)}\left(\sigma(V(n-1))\Pi(\mathcal{S}(n-1))\Psi(\mathcal{P}(n-1))\right)\\
{}\le{}&%
\Psi(\mathcal{P}(n-1))^{-1}\Psi(\mathcal{P}(n-2))^{-2^{v(n)+w(n)}}\\
&\quad\times
F_{v(n)+w(n)+v(n-1)+w(n-1)}\left(\sigma(V(n-2))\Pi(\mathcal{S}(n-2))\Psi(\mathcal{P}(n-2))\right)\\
{}\le{}&%
\Psi(\mathcal{P}(n-1))^{-1}\Psi(\mathcal{P}(n-2))^{-1}\\
&\quad\times
F_{v(n)+w(n)+v(n-1)+w(n-1)}
\left(\sigma(V(n-2))\Pi(\mathcal{S}(n-2))\Psi(\mathcal{P}(n-2))\right)\\
{}\le{}&\cdots\\
{}\le{}&%
\Psi\left(\bigsqcup_{\nu=1}^{n-1}\mathcal{P}(\nu)\right)^{-1}
F_{v(n)+w(n)+\cdots+v(2)+w(2)}\left(\sigma(V(1))\Pi(\mathcal{S}(1))\Psi(\mathcal{P}(1))\right).
\end{align*}
By using (ii) of Lemma \ref{Lem:HB_induction} once more and recalling \eqref{EQ:P_partition},
\begin{align*}
\sigma(M_1\cdots M_k)
&=
\Psi(\mathcal{P}(n))^{-1}\sigma(V(n))\Pi(\mathcal{S}(n))\Psi(\mathcal{P}(n))\\
&\le
\Psi\left(\mathcal{P}\right)^{-1}
F_{\sum_{\nu=1}^{n}(v(\nu)+w(\nu))}(\sigma(V(0))\Pi(\mathcal{S}(0))+1)\\
&=
\Psi\left(\mathcal{P}\right)^{-1}F_{\sum_{\nu=1}^{n}(v(\nu)+w(\nu))}(2).
\end{align*}
By definition of $w(\nu)$ and $\mathcal{S}(0)=\mathcal{S}(n)=\emptyset$,
we have
\begin{align*}
\sum_{\nu=1}^{n}(v(\nu)+w(\nu))
&=2\sum_{\nu=1}^{n}v(\nu)+|\mathcal{S}(n)|-|\mathcal{S}(0)|\\
&=2\sum_{\nu=1}^{n}v(\nu)=2\omega(V(n))=2\omega(M_1\cdots M_k)=2K.
\end{align*}
Thus the lemma follows.
\end{proof}

%%%%%%%%%%%%%%%%%%%%
%
% Lemma: Chen and Tang
%
%%%%%%%%%%%%%%%%%%%%
We next prove the following auxiliary lemma,
which is an amicable number analogue of the lemma used in the clever trick of Chen and Tang~\cite[Lemma~2.3]{Chen_Tang} or of Kobayashi
(see the remark before Corollary 1.7 of \cite{Nielsen_New}).
\begin{lemma}
\label{Lem:Chen_Tang}
For any anarchy harmonious tuple $(M_{i})_{i=1}^{k}$ with $\omega(M_1\cdots M_k)=K$,
\[
\sigma(M_1\cdots M_k)\frac{\Phi(\mathcal{P})}{\Pi(\mathcal{P})}\le F_K(\Pi(\mathcal{P}))\Pi(\mathcal{P})^{-2},
\]
where $\mathcal{P}$ is the set of all prime factors of $M_1\cdots M_k$.
\end{lemma}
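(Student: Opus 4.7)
The plan is to deduce the lemma by a single, direct application of Lemma~\ref{Lem:HB_ineq2} to the harmonious identity, bypassing the inductive machinery of Lemmas~\ref{Lem:HB_induction} and \ref{Lem:carry_out_induction}. The key observation is that the anarchy condition lets us label the primes of $M_1\cdots M_k$ unambiguously by which $M_i$ they divide, so all $K$ of them can be processed simultaneously rather than step-by-step.

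For each prime $p\in\mathcal{P}$, writing $p^e\parallel M_1\cdots M_k$, set $m:=p^{e+1}$. Ordering these $K$ values as $m_1\le\cdots\le m_K$ (in fact strictly, since prime powers have unique factorization) and letting $J_i$ consist of the indices $j$ for which the prime associated to $m_j$ divides $M_i$, I take
\[
a_i:=\prod_{j\in J_i}p_j,\qquad b_i:=\prod_{j\in J_i}(p_j-1),
\]
so that $a:=a_1\cdots a_k=\Pi(\mathcal{P})$ by pairwise coprimality of the $M_i$. A direct factorization gives
\[
\frac{b_i}{a_i}\prod_{j\in J_i}\left(1-\frac{1}{m_j}\right)^{-1}
=\prod_{p^e\parallel M_i}\frac{p-1}{p}\cdot\frac{p^{e+1}}{p^{e+1}-1}
=\prod_{p^e\parallel M_i}\frac{p^e(p-1)}{p^{e+1}-1}
=\frac{M_i}{\sigma(M_i)},
\]
so the harmonious identity $\sum_i M_i/\sigma(M_i)=1$ makes the sum on the left equal to exactly $1$.

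Hence hypothesis \eqref{EQ:HB_assump_A2} of Lemma~\ref{Lem:HB_ineq2} holds with equality, and \eqref{EQ:HB_assump_B2} follows by striking out the $j=K$ factor from the unique $J_{i^{\ast}}$ containing $K$: this multiplies the $i^{\ast}$-th summand by $1-1/m_K<1$, dropping the total strictly below $1$. Applying Lemma~\ref{Lem:HB_ineq2} then yields
\[
\Pi(\mathcal{P})\prod_{j=1}^{K}(m_j-1)\le F_K(\Pi(\mathcal{P})),
\]
and since $\prod_{p^e\parallel M_1\cdots M_k}(p^{e+1}-1)=\sigma(M_1\cdots M_k)\Phi(\mathcal{P})$ (the factors of $p-1$ in $\Phi(\mathcal{P})$ cancel the denominators inside $\sigma$), dividing through by $\Pi(\mathcal{P})^2$ gives the claimed inequality.

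I expect no major obstacle; the argument is essentially a clean reformulation of the harmonious identity as the Diophantine inequality to which Lemma~\ref{Lem:HB_ineq2} applies verbatim with $R=K$. The only subtleties are the algebraic verification of the identity $(b_i/a_i)\prod(1-1/m_j)^{-1}=M_i/\sigma(M_i)$ and the strict inequality needed for \eqref{EQ:HB_assump_B2}, both of which are routine.
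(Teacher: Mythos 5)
Your proof is correct and takes essentially the same route as the paper's: both apply Lemma~\ref{Lem:HB_ineq2} directly with $a_i=\prod_{p\mid M_i}p$, $b_i=\prod_{p\mid M_i}(p-1)$, and $m_j=p^{e+1}$ ranging over $p^e\parallel M_1\cdots M_k$, after recasting \eqref{EQ:fundamental_eq} as $\sum_i (b_i/a_i)\prod_{p^e\parallel M_i}(1-1/p^{e+1})^{-1}=1$. You simply spell out the verification of the per-term identity, the ordering of the $m_j$, and the strict inequality needed for \eqref{EQ:HB_assump_B2}, all of which the paper leaves implicit.
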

\begin{proof}
Since $(M_{i})_{i=1}^{k}$ is a harmonious tuple, the identity \eqref{EQ:fundamental_eq} holds.
Then by using
\[
a_i=\prod_{p|M_i}p,\quad
b_i=\prod_{p|M_i}(p-1),\quad
a=a_1\cdots a_k,
\]
we can rewrite the identity \eqref{EQ:fundamental_eq} as
\[
\sum_{i=1}^{k}
\frac{b_i}{a_i}\prod_{p^e\parallel M_i}\left(1-\frac{1}{p^{e+1}}\right)^{-1}=1.
\]
Also, if we remove some prime factor of $M_1\cdots M_k$ from this identity,
then the left-hand side becomes smaller. Thus, we can apply Lemma \ref{Lem:HB_ineq2}
to obtain
\[
a\prod_{p^e\parallel M_1\cdots M_k}(p^{e+1}-1)\le F_{K}(a).
\]
Since $a=\Pi(\mathcal{P})$, this gives
\[
\sigma(M_1\cdots M_k)\Pi(\mathcal{P})\Phi(\mathcal{P})
\le
F_{K}(\Pi(\mathcal{P})).
\]
This completes the proof.
\end{proof}

\begin{proof}[Proof of Theorem \ref{Thm:main}]
If $\Pi(\mathcal{P})>2^{2^{K}}$, then we use Lemma \ref{Lem:carry_out_induction} to obtain
\[
\sigma(M_1\cdots M_k)\frac{\Phi(\mathcal{P})}{\Pi(\mathcal{P})}
\le
F_{2K}(2)\Pi(\mathcal{P})^{-2}
<
F_{2K}(2)2^{-2\cdot 2^{K}}.
\]
On the other hand, if $\Pi(\mathcal{P})\le2^{2^{K}}$, then we use Lemma \ref{Lem:Chen_Tang}
to obtain
\begin{align*}
\sigma(M_1\cdots M_k)\frac{\Phi(\mathcal{P})}{\Pi(\mathcal{P})}
&\le
F_K(\Pi(\mathcal{P}))
\Pi(\mathcal{P})^{-2}\\
&=
\Pi(\mathcal{P})^{2^{K-1}-2}\left(\Pi(\mathcal{P})^{2^{K-1}}-1\right)
\le
F_{2K}(2)2^{-2\cdot 2^{K}}.
\end{align*}
Thus in any case we have
\begin{equation}
\label{EQ:pre_final}
\sigma(M_1\cdots M_k)\frac{\Phi(\mathcal{P})}{\Pi(\mathcal{P})}\le F_{2K}(2)2^{-2\cdot 2^{K}}.
\end{equation}
Note that
\[
\sigma(M_1\cdots M_k)\frac{\Phi(\mathcal{P})}{\Pi(\mathcal{P})}
=
M_1\cdots M_k
\prod_{p^e\parallel M_1\cdots M_k}\left(1-\frac{1}{p^{e+1}}\right)
\]
Combining this identity with \eqref{EQ:pre_final},
we obtain
\[
M_1\cdots M_k
\le\prod_{p^e\parallel M_1\cdots M_k}\left(1-\frac{1}{p^{e+1}}\right)^{-1}F_{2K}(2)2^{-2\cdot 2^{K}}.
\]
By using
\[
\prod_{p^e\parallel M_1\cdots M_k}\left(1-\frac{1}{p^{e+1}}\right)^{-1}
<
\prod_{p}\left(1-\frac{1}{p^{2}}\right)^{-1}
=
\frac{\pi^2}{6},\quad
F_{2K}(2)<2^{4^{K}},
\]
we finally arrive at
\[
M_1\cdots M_k<\frac{\pi^2}{6}\,F_{2K}(2)2^{-2\cdot2^K}<\frac{\pi^2}{6}\,2^{4^{K}-2\cdot2^K}.
\]
This completes the proof.
\end{proof}

\begin{center}
\textbf{Acknowledgements.}
\end{center}

The author would like to express his gratitude to Prof.~Kohji Matsumoto
for his useful comments and encouragement. In particular, the discussion
with him enables the author to find a gap in the original manuscript
and to generalize the result to amicable tuples.
The author also would like to thank Mr.~Yuki Yoshida
for kindly providing his program for searching harmonious pairs
and giving useful advices on the searching program.
This work was supported by Grant-in-Aid for JSPS Research Fellow
(Grant Number: JP16J00906).

\bibliographystyle{amsplain}
\bibliography{/Users/suzuyu/Documents/Math/Tex/BibTeX/Perfect_Number,yos}

\vspace{3mm}
\begin{flushleft}
{\small
{\sc
Graduate School of Mathematics, Nagoya University,\\
Chikusa-ku, Nagoya 464-8602, Japan.
}

{\it E-mail address}: {\tt m14021y@math.nagoya-u.ac.jp}
}
\end{flushleft}
\end{document}